\theoremstyle{definition}
\newtheorem{lemma}{Lemma}[section]
\newtheorem{proposition}{Proposition}[section]
\newtheorem{theorem}{Theorem}[section]
\newtheorem{example}{Example}[section]
\newtheorem{corollary}{Corollary}[section]
\newtheorem{remark}{Remark}[section]
\newcommand{\RR}{\mathbb{R}}
\newcommand{\PP}{\mathbb{P}}
\def\Real{{\mathbb R}}
\def\SV{\mathop{\it SV}\nolimits}
\def\Prob{{\mathbb P}}
\def\F{{\mathcal F}}
\def\conv{\mathop{\rm conv}\nolimits}
\def\vhalf{\textstyle{\frac 1 2}}
\def\sign{\mathop{\rm sign}\nolimits}
\def\given{\mid}
\def\E{{\mathcal E}}
\def\Xmax{X_{(n)}}
\def\Zmax{Z_{(n)}}
\title{Non-standard boundary behaviour in two-component mixture models}
\author{
  Heather Battey\thanks{Department of Mathematics, Imperial College London}, \quad 
  Peter McCullagh\thanks{Department of Statistics, University of Chicago} \quad and \quad 
  Daniel Xiang$^{\dagger}$
}
\date{\today}
\begin{document}

\maketitle

\begin{abstract}

	The empirical Bayes approach to multiple testing, widely used in modern statistical contexts, is frequently formulated as a two-component mixture model of the form $F_\theta = (1-\theta)F_0 + \theta F_1$, where $F_0$ is standard Gaussian and $F_1$ is a completely specified heavy-tailed distribution with the same support. For a sample of $n$ independent and identically distributed values $X_i \sim F_\theta$, the maximum likelihood estimator $\hat\theta_n$ is asymptotically normal provided that $0 < \theta < 1$ is an interior point.  This paper investigates the large-sample behaviour for boundary points, which is entirely different and strikingly asymmetric for $\theta=0$ and $\theta=1$.  The reason for the asymmetry has to do with typical choices such that $F_0$ is an extreme boundary point and $F_1$ is usually not extreme. On the right boundary, well known results on boundary parameter problems are recovered, giving $\lim \Prob_1(\hat\theta_n < 1)=1/2$. On the left boundary, $\lim\Prob_0(\hat\theta_n > 0)=1-1/\alpha$, where $1\leq \alpha \leq 2$ indexes the domain of attraction of the density ratio $f_1(X)/f_0(X)$ when $X\sim F_0$. For $\alpha=1$, which is the most important case in practice, we show how the tail behaviour of $F_1$ governs the rate at which $\Prob_0(\hat\theta_n > 0)$ tends to zero. A new limit theorem for the joint distribution of the sample maximum and sample mean conditional on positivity establishes multiple inferential anomalies. Most notably, given $\hat\theta_n > 0$, the likelihood ratio statistic has a conditional null limit distribution $G\neq\chi^2_1$ determined by the joint limit theorem.  We show through this route that no advantage is gained by extending the single distribution $F_1$ to the nonparametric composite mixture generated by the same tail-equivalence class. \\

\bigskip

\noindent \emph{Some key words:} $\alpha$-stable limit law; 
likelihood-ratio statistic; 
local false discovery rate;
multiple testing;
non-standard likelihood theory;
regular variation;
tail behaviour.
\end{abstract}

\section{Introduction}\label{secIntro}

Let $F_0, F_1$ be distinct probability distributions on the same measurable space.
For each $0 \le\theta \le 1$, the mixture
\begin{equation}\label{eqMixtureModel}
	F_\theta = (1 - \theta) F_0 + \theta F_1
\end{equation}
is also a probability distribution on the same space.
In this paper, the phrase \lq\lq mixture model generated by $F_0, F_1$\rq\rq\ is interpreted as the set of convex combinations
\[
\conv(F_0, F_1) = \{ F_\theta : 0 \le \theta \le 1\}
\]
in which the generators $F_0, F_1$ are the boundary points.

The model \eqref{eqMixtureModel} arises particularly in connection with the empirical Bayes approach to multiple testing \citep{Efron2001}, 
where interest is either in evaluating the simultaneous correctness of a set of statements concerning the same null hypothesis, or in the assessment of different null hypotheses, at most a small number of which are false. The case in which one of the components is standard Gaussian reflects a reduction by sufficiency to a set of pivotal test statistics and is particularly relevant for the applications we have in mind. The other component is typically chosen to have tails that are heavier than Gaussian, for instance a standard Gaussian convolution corresponding to a signal plus noise model at the relevant site.  The interpretation of \emph{site} is context-specific and may refer to a genomic locus in a genetics context or an energy level in a particle physics context. The response at site~$i$ is treated as a draw from $F_\theta$, which means $F_1$ with probability~$\theta$ and $F_0$ with probability $1-\theta$, on account of the label being unknown.

The goal of this paper is to study the large-sample behaviour of the maximum-likelihood estimator and related statistics
when $F_0$ and $F_1$ have the same support and the data are generated independently according to the boundary distribution $X_{1},\ldots,X_n\sim F_0$. The statistical relevance of such considerations is context dependent but in general reflects the usual analogue of proof by contradiction underpinning standard hypothesis tests of null effects.

Under the model \eqref{eqMixtureModel}, $X_1,\ldots, X_n$ are independent random variables with distribution~$F_\theta$ for some~$0\le\theta\le 1$.
Standard theory for maximum-likelihood estimators tells us that, under suitable regularity conditions,
$n^{1/2} (\hat\theta_n - \theta)$ has a zero-mean Gaussian limit.
A principal regularity condition is that all distributions in the model have the same support. This condition need not be satisfied by $F_0, F_1$, but it is automatic for the sub-model in which $0 < \theta < 1$. Whether~$F_0$ and~$F_1$ have the same support or not, a second regularity condition requires $\theta$ to be an interior point. Otherwise, if $\theta=0$, the event $\hat\theta_n = 0$ has positive probability, so the limit distribution cannot be Gaussian. The mixture model can therefore be regular only if the boundary points are excluded. From a list of further regularity conditions, there is the possibility that more than one is violated. In the empirical Bayes formulations that motivated the present work, infinite variance of the density ratio $f_1(X)/f_0(X)$ presents considerable challenges not covered by existing literature on boundary inference problems.

In the standard case, the generators share a common support and have positive densities $f_0(x), f_1(x)$,
so that $F_\theta$ has density $f_\theta(x) = (1-\theta)f_0(x) + \theta f_1(x)$, which is linear in~$\theta$.
The contribution $\log f_\theta(x)$ to the log likelihood from a single observation at~$x$ is strictly concave as a function of~$\theta$,
and the sum $\sum_i \log f_\theta(x_i)$ is also strictly concave.
Consequently, the maximum-likelihood estimate is either a stationary point or a boundary point.

The log likelihood derivatives are 
\begin{align}
\begin{split}\label{eqDeriv}
l'(\theta) &=  \sum_{i=1}^n \frac{h(X_i) - 1} {1-\theta + \theta h(X_i)}  \\
l''(\theta) &= -\sum_{i=1}^n \Bigl(\frac{h(X_i) - 1} {1-\theta + \theta h(X_i)}\Bigr)^2 \leq 0, 
\end{split}
\end{align}
and $l'(0) = \sum (h(X_i) - 1)$, where $h(x) = f_1(x)/f_0(x)$ is the density ratio of the boundary points. It follows that the event $\hat\theta_n > 0$ is the same as the event $n^{-1} \sum h(X_i) > 1$, where $X_1,\ldots, X_n$ are independently distributed as $F_0$. In other words, $\hat\theta_n > 0$ if and only if the sample average of the transformed variables $h(X_i)$ exceeds its expected value
\[
E (h(X)) = \int h(x) f_0(x)\, dx = 
\int f_1(x)\, dx = 1.
\]
If $h(X)$ has finite variance $\sigma^2$, then $n^{-1/2}l'(0)$ is zero-mean Gaussian for large $n$, and $-n^{-1}l''(0) \rightarrow \sigma^2$ by the law of large numbers. In this case $\hat\theta_n=0$ with probability 1/2 in the large-sample limit; conditionally on $\hat\theta_n>0$, the random variable $n^{1/2}\hat\theta_n>0$ is distributed half-Gaussian with scale parameter~$\sigma$. This is the familiar boundary-parameter result established by other authors \citep[e.g.][]{Chernoff1954,SelfLiang1987,Geyer1994}. The boundary probabilities exhibit more interesting behaviours in the cases for which $h(X)$ does not have a finite second moment under the null model. It is those situations that we explore here.

\section{Preliminary insights and outline of results}\label{secPrelim}

Here and henceforth, $\Prob_0(\cdot)$ denotes the probability with respect to the null generator or its $n$-fold product $F_0^{\otimes n}$. The Landau notation $a(n)\sim b(n)$ means 
\[
\lim_{n\rightarrow \infty} \frac{a(n)}{b(n)} = 1,
\]
or equivalently $a(n)=b(n)(1+o(1))$ for positive functions $a(n), b(n)$. 

The motivating example for a large part of this paper is a restricted class of two-component mixtures in which
the density ratio is an even function that is continuous, unbounded and ultimately monotone.  Ultimate monotonicity means that to each $\eta$ sufficiently large there corresponds a number $\xi$ such that
\begin{equation}\label{umc}
\{x : h(x) > \eta\} = \{x : |x| > \xi\}.
\end{equation}
Mills's approximation to the Gaussian tail probability yields
\[
\Prob_0(|X| > \xi) = \frac{2\phi(\xi)}{\xi}\bigl(1+ O(\xi^{-2})\bigr).
\]
 On using the implicit definition $\phi(\xi) = \eta^{-1} f_1(\xi)$,
\begin{equation}\label{null_tail}
\Prob_0(h(X) > \eta) = \Prob_0(|X| > \xi) \sim 2\xi^{-1} \phi(\xi) 
=\frac{2 f_1(\xi)} {\eta\, \xi}.
\end{equation}
It follows from \eqref{null_tail} that the asymptotic inverse relationship $\xi(\eta) = h^{-1}(\eta)$ determines the null tail behaviour via $f_1(\xi)/\xi$,  
and thereby the limit distribution of normalized sums.

Let $L: \RR^+ \rightarrow \RR^+$ be a slowly-varying function, in the sense that, for all $k>0$
\begin{equation}\label{eqSlow}
\lim_{x\rightarrow \infty} \frac{L(kx)}{L(x)}=1.
\end{equation}
For a large class of non-null generators,  including all whose density satisfies
\begin{equation}\label{tail_index_class}
-\log f_1(x) \sim |x|^{2\kappa} L(x)
\end{equation}
for some $0 \le \kappa < 1$, we find that $\xi(\eta) \sim \sqrt{2\log \eta}$ as $\eta\to\infty$.
This asymptotic inverse implies
\[
\Prob_0(h(X) > \eta) \sim \frac{2f_1(\xi(\eta))} {\eta \sqrt{2\log \eta}} \sim \frac 1 {\eta\, L_1(\eta)},
\]
where, for every $f_1$ in the class \eqref{tail_index_class}, $L_1$~is also slowly varying. In all such cases, the random variable $h(X)$, or more correctly, its distribution, 
belongs to the Cauchy domain of attraction with index $\alpha=1$, meaning that suitably standardized sums converge in distribution to a random variable in the Cauchy class. Thus, the Cauchy class is virtually the default in practical
work with mixtures where the null generator is Gaussian and $F_1$ is symmetric with heavier tails. In such situations, it is important for the appropriate calibration of statistical inference to characterize the rate at which the error tends to zero.

To outline the nature of our results, if $F_1$~has regularly-varying tails with index $-2\delta$, i.e., the upper tail probability $\bar{F}_1(x)=\Prob_1(X>x)$ satisfies 
\[
\lim_{x\rightarrow \infty} \frac{\bar{F}_1(tx)}{\bar{F}_1(x)}=t^{-2\delta}
\]
for $t > 0$, then the rate is logarithmic:
\[
\Prob_0(\hat\theta_n > 0) \sim\frac {\delta} {\log n}.
\]
By contrast, if $F_1$ has exponential tails, e.g.,~$-\log \bar F_1(x) \sim |x|^{2\gamma}$ for large~$x$ and $0 < \gamma < 1$, then
\[
\Prob_0(\hat\theta_n > 0) \sim\frac {\gamma\, 2^\gamma} {(\log n)^{1-\gamma}}.
\]

While less important for statistical applications, we also show that if $h(X_i)$ belongs to the the domain of attraction of any non-Cauchy class, which necessarily has index $1 < \alpha \le 2$, $\lim \Prob_0(\hat\theta_n > 0) = 1 - \alpha^{-1}$, which is strictly positive but not more than one half. A proof is given at the end of Section \ref{subsec:stabilizing-seq}.

The limit distribution of the likelihood ratio statistic conditional on $\hat\theta_n > 0$ is obtained in \S \ref{L-R_limit} using a non-quadratic local approximation. The derivation is based on a new limit theorem establishing a large-sample joint distribution for the sample mean and sample maximum of a random variable in the Cauchy domain of attraction, conditional on positivity of the sample mean.

\section{Existing literature}

The inferential problem for two-component mixtures belongs to a class of boundary problems for which an extensive literature was carefully surveyed by \citet{Brazzale}. Appendix A.1 of that work outlines an argument, due to \citet{SelfLiang1987}, establishing the limit distribution of the log likelihood-ratio statistic when the true value of the parameter is on the boundary. Two aspects of the argument are problematic when $h(X)$ does not have finite variance under the null distribution $F_0$: that the Fisher information at $\theta=0$ is not finite, this being the expectation of $\sum (h(X_i)-1)^2$; and that a suitably rescaled version of $l'(0)$ is not asymptotically normally distributed. The same issues afflict the argument of \citet{GS1985} \citep[see][Appendix A.3]{Brazzale}.

In the context of a mixture model with two unit-Gaussian components, one having unknown mean, \citet{Bickel} and \citet{LiuShao} established that the likelihood ratio statistic diverges to infinity at rate $O(\log(\log n))$ and that the asymptotic null distribution of a suitably standardized version is of extreme-value type. The conclusions of the present paper are strikingly different: that the likelihood ratio statistic converges to zero under the null model and that its conditional distribution given positivity has a limit distribution~$G$ (Theorem \ref{LR_limit}) that is visibly close to but not exactly $\chi_1^2$. 

Gaussian-Gaussian mixtures of unequal means and equal variances lead to density ratios $h(X)$ that are log-linear in $X$ and thus in the normal domain of attraction. The extreme-value limit distribution emerges from estimation of the unknown mean parameter in the non-null component. In recent work, \citet{ShiDrton} have studied a split-sample version of the likelihood ratio statistic for the mixture problem with unequal means, following the generic \emph{universal inference} construction of \citet{WRB}. In well-behaved parametric problems, universal inference violates sufficiency and is inferior to a standard likelihood-based analysis. Its advantage in the two-component mixture setting is that sample splitting breaks some dependencies in the likelihood ratio construction, allowing \citet{ShiDrton} to show that the asymptotic null distribution of the split-sample likelihood ratio statistic is standard Gaussian after centering and scaling. In our case, the source of the difficulties is different and is not evaded by sample splitting. 

An important conceptual difference between the models is that the signal in \citet{ShiDrton} is an arbitrary fixed real number corresponding to the mean of the non-null distribution. By contrast, the signal in our formulation is a random variable whose realizations are centered at zero and appreciably large with non-negligible probability. 

\citet{LiChenMarriot2009} and \citet{ChenLi2009} allow two-component mixture constructions of the form $(1-\theta)f(x;\lambda_0) + \theta f(x;\lambda_1)$ such that the Fisher information at $\theta=0$ is not finite. However they study a different problem based on a penalized likelihood function that forces the estimate $\hat\theta$ away from 0 and 1. Their implicit assessment of homogeneity is thus based on $\lambda_0=\lambda_1$ rather than $\theta=0$.

Contrasted with \citet{GS1985}, \citet{SelfLiang1987} and \citet{Bickel}, non-existence of second moments necessitates a radically different approach based on the theory of $\alpha$-stable limits \citep{GK1954} and regular variation \citep{Bingham}. From this we are able to delineate the role of tail properties in determining the type-one error rate $\Prob_0(\hat\theta_n > 0)$ and the anomalous limiting behaviour of likelihood-based statistics.

\section{Limit distributions for sums}\label{secLimitDist}

\subsection{Introduction}

From the discussion of \S \ref{secIntro}, the event $\hat\theta_n >0$ is equivalent to the event that the sample average of the density ratios $h(X_i)$ exceeds its expectation. The large-sample boundary behaviour $\PP_0(\hat\theta_n >0)$ thus requires some theory of limit distributions for sums of iid random variables with non-finite variance.

The results of this section are stated in terms of a generic random variable $X$. This is both for notational convenience and for ease of application of the results beyond the Gaussian mixture setting for which they were initially conceived. For the statistical questions we have in mind, addressed in \S \ref{secImplications}, the results are applied with $h(X)$ or $h(X)-1$ in place of $X$, using the ultimate monotonicity argument explained in \S \ref{secPrelim}

\subsection{Stable limits}\label{secStable}
The theory of limit distributions for the sum of independent and identically distributed random variables is tied up with stability of convolutions.  Modulo affine transformation, every distribution that has a convolution limit is associated with a pair $(\alpha,\beta)$, and is said to be in the domain of attraction of the $(\alpha, \beta)$ stable law;  in essence, the set of Borel distributions on the real line is partitioned into equivalence classes. There are also non-degenerate distributions that do not belong to the domain of attraction of any stable class, for example, $\bar F(x) \sim 1/\log x$.

Every stable distribution with $\alpha \ge 1$ has a density that is strictly positive on the real line; only in a few cases is it possible to express the density in terms of standard functions. However, the characteristic function of every limit distribution is necessarily of the form $\psi(a + b t)$ where
\[
\log \psi(t) =  \left\{ \begin{array}{ll} 
- t^2 & \quad(\alpha = 2), \\
-|t|\bigl(1 +   i\beta \sign(t) \frac{2}{\pi}\log|t| \bigr)  & \quad (\alpha = 1),\\
-|t|^\alpha \bigl(1 - i \beta \sign(t) \tan(\pi\alpha/2) \bigr) & \quad (\alpha \neq 1).
\end{array} \right.
\]
See \citet[][Theorem~8.3.2]{Bingham} for a statement of this result, or
\citet[][chapter~34]{GK1954} where the sign of~$\beta$ is mistakenly reversed for $\alpha\neq 1$ \citep[see comment I.7 on page 253 of][]{Zolotarev}. 

Note that not all $(\alpha,\beta)$-combinations give rise to distinct distributions.
In the Gaussian case ($\alpha = 2$) the last version of the log characteristic function reduces to $-t^2$, so $\beta$ is immaterial and all limits are symmetric.
However, limits in every other class are symmetric only if $\beta=0$.

\begin{figure}
	\begin{center}
		\includegraphics[width=0.65\linewidth]{./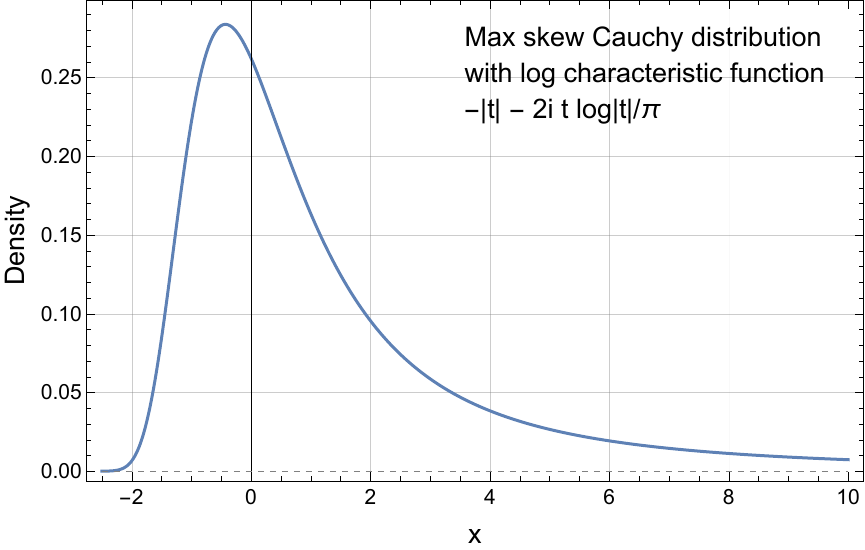}
		\caption{Density function of the maximally-skew Cauchy distribution}
	\end{center}
\end{figure}

\citet[][section~2.5]{Zolotarev} gives the tail behaviour of the $(\alpha,\beta)$-limit distribution parameterized according to the characteristic function shown above:
\[
P_{\alpha,\beta}(X > x) \sim (1 + \beta) C_\alpha x^{-\alpha};\quad
P_{\alpha,\beta}(X < - x) \sim (1 - \beta) C_\alpha x^{-\alpha},
\]
where $C_\alpha = \pi^{-1} \Gamma(\alpha) \sin(\pi\alpha/2)$ and $C_1=\pi^{-1}$.
See also \citet[][section~XVII.6]{Feller}. For $\beta = -1$, the first limit is interpreted as $x^\alpha P_{\alpha,\beta}(X > x) \to 0$ as $x \to \infty$;
likewise for $\beta=1$ in the second limit. Note that $C_2 = 0$, so this characterization $P_{2,\beta}(X > x) = o(x^{-2})$ is correct but not tight for Gaussian limits. Since the above expression for the tail probability is stated in \citet{Zolotarev} without proof for the $\alpha=1$ case, we include a derivation in Appendix \ref{appTailProbability}.

For the two-component mixture problems considered in this paper, the class of limit distributions that can arise is a proper subset of those listed above. First, the fact that each summand $h(X_i)$ has finite mean implies $\alpha \ge 1$. Second, the fact that the random variables are positive implies maximal skewness with $\beta = 1$.
\begin{equation}\label{eqPAlphaBeta}
P_{\alpha,\beta}(X<0) = \frac{1}{2}\Bigl(1- \frac{b (\alpha-2)}{2}\Bigr),
\end{equation}
where $b$ relates to $\beta$ as \citep[][equation I.19]{Zolotarev}
\[
\beta = \cot\Bigl(\frac{\pi \alpha}{2}\Bigr)\tan \Bigl(\frac{\pi b (\alpha - 2)}{\alpha} \Bigr),
\]
with $\beta = b = 1$ at the boundary of the parameter space for $\beta \in [-1,1]$. It follows that \eqref{eqPAlphaBeta} reduces to $1/\alpha$ for $\beta=1$.

In the asymmetric Cauchy class, the distribution with log characteristic function
\[
\log \psi(t) = -|t| - 2i t \log|t| /\pi
\]
has the maximally skew density,
\begin{equation}\label{skew_cauchy_density}
f(x) = \frac 1 \pi \int_0^\infty e^{-t} \cos(t x + 2t\log t/\pi)\, dt,
\end{equation}
which is shown in Fig.~1. Although the summands in $l'(0)$ have finite mean, the characteristic function for $\alpha=1$
does not have a first-order Taylor expansion so the limit distribution does not have a first moment. The left tail is sub-Cauchy and $P_{1,1}(X < 0) \simeq 0.3652$; the right tail behaviour is $f(x) \sim 2/(\pi x^2)$ or $\bar F(x) \sim 2/(\pi x)$, where $\bar F(x)=P_{1,1}(X>x)$.

\subsection{Stabilizing sequences}
\label{subsec:stabilizing-seq}

A primary difficulty in understanding the boundary behavior of the MLE is finding an appropriate normalization of the sum $\sum_{i=1}^n h(X_i)$ that converges in distribution to a stable law. Since guidance on the construction of the stabilizing sequence is opaque, we provide some background in this section. 

Let $F$ be the cumulative function of a distribution on the positive real line, and let $\bar F(x) = 1 - F(x)$ be the right-tail probability. In order that $F$ belong to the domain of attraction of a stable law with index $0 < \alpha < 2$,
it is necessary and sufficient that the tail be regularly varying, i.e.,
\begin{equation}\label{tail_prob}
\bar F(x) \sim \frac{2C_\alpha} {x^\alpha L(x^\alpha)}
\end{equation}
for large~$x$ and some slowly-varying function $L$.
This is a re-statement of a special case of Theorem~2 from section~35 of \citet{GK1954}. The slow-variation factor in \eqref{tail_prob} is expressed in the form $L(x^\alpha)$ rather than $L(x)$ or $1/L(x)$ as this simplifies the scaling sequence~\eqref{Bn_sequence}. For the moment, the choice of constant is immaterial, but the particular choice
\begin{equation}\label{eqC}
C_\alpha = \pi^{-1} \Gamma(\alpha)\sin(\pi\alpha/2),
\end{equation}
matching the right tail of the limit distribution, will subsequently be convenient.

Let $X_1,\ldots, X_n$ be independent and identically distributed with distribution $F$
in the domain of attraction of some stable law with index~$0 < \alpha \le 2$.
Then there exist deterministic stabilizing sequences $A_n, B_n$ such that
\[
\frac 1 {B_n} \sum_{i=1}^n  X_i - A_n
\]
has the stable limit distribution $P_{\alpha,\beta}$ with index $\alpha$.
The skewness coefficient $|\beta| \le 1$ is a balance between the two tails:
if the support of $F$~is bounded below, then $\beta = 1$.
For $\alpha = 2$, the scaling coefficients are $B_n \propto n^{1/2}$.
Otherwise, for $0 < \alpha < 2$, they are determined by the condition
\begin{equation}\label{eqScalingCondition}
\lim_{n \to \infty} n \bar F(B_n x) = 2C_\alpha x^{-\alpha},
\end{equation}
for each $x > 0$ \citep[][section~35]{GK1954}. Some partial intuition for why equation \eqref{eqScalingCondition} determines the scaling sequence $\{B_n\}$ is that convergence to the stable limit, in terms of the characteristic function $g$ of the original random variables, is equivalent to
\[
e^{-iA_n t} g(t/B_n)^n \rightarrow \psi(t), \quad n\rightarrow \infty.
\]
Since $t/B_n \to 0$ for fixed $t$, it follows that the behaviour of $g$ near zero is key to solving for $B_n$. By the inversion formula for characteristic functions, the behavior of $g$ near the origin manifests in the tail behaviour of $\bar F$. Conditions \eqref{tail_prob} and \eqref{eqScalingCondition} imply
\[
\lim_{n \to \infty} \frac{n B_n^{-\alpha}} {L(B_n^\alpha x^\alpha)} = \lim_{n \to \infty} \frac{n B_n^{-\alpha}} {L(B_n^\alpha)} = 1.
\]
To calculate $B_n$, it suffices to invert the asymptotic relation $\tilde{B}_n L (n\tilde{B}_n) \sim 1$, where $\tilde{B}_n:=B_n^\alpha/n$.
This is achieved through the de Bruijn conjugate $L^\dagger$ \citep{deBruijn}, which is a slowly varying function defined (up to an asymptotic equivalence) by the condition,
\begin{equation}\label{eqDB}
L(x L^\dagger(x)) L^\dagger(x) \sim L^\dagger(x L(x)) L(x) \sim 1,
\end{equation}
as $x \to \infty$.
The condition $\tilde{B}_n L (n\tilde{B}_n)\sim 1$ implies that $\tilde B_n \sim L^\dagger(n)$, which defines the scaling sequence up to asymptotic equivalence as
\begin{equation}\label{Bn_sequence}
B_n \sim (n L^\dagger(n))^{1/\alpha}.
\end{equation}

For a proof of equation \eqref{eqDB}, see Theorem 1.5.13 of \citet{Bingham}. Conjugation is an involution $L^{\dagger\dagger} \sim L$, which is also a group inverse under the compositional operation discussed in Appendix \ref{appDBG}. For the moment, it suffices to remark that if $L(n) \sim K (\log n)^\gamma$ for any real $\gamma$ and $K > 0$, then  $L^\dagger(n) \sim 1/L(n)$. In general, however, the conjugate is not equivalent to the reciprocal, and the conjugate of a functional product is not the product of the conjugates.

For $1 < \alpha \le 2$, the distribution $F$ in \eqref{tail_prob} has a finite mean~$\mu$, 
and the footnote to Theorem~2 in section~35 of \citet{GK1954} gives the centering sequence $A_n \sim n \mu/B_n$, which implies that
\[
\frac 1 {B_n} \sum_{i=1}^n  (X_i - \mu)
\]
has the stable limit distribution $P_{\alpha, 1}$ discussed in the preceding section.
It follows from \eqref{eqPAlphaBeta} that the exceedance event $\bar X_n > \mu$ has a large-sample limit probability
\begin{equation}\label{alpha_limit_prob}
\lim_{n\to\infty} \Prob(\bar X_n > \mu) = P_{\alpha, 1}(\varepsilon > 0) = 1 - 1/\alpha,
\end{equation}
where $\varepsilon$ is a random variable with distribution $P_{\alpha, 1}$. This limit is strictly positive, but not more than one half. The case $0 <\alpha < 1$ does not arise in mixture models and is not discussed here. The single remaining case $\alpha=1$ is a little more complicated because $A_n \neq n\mu/B_n$ even if the mean is finite.
The argument in \S\ref{secIntro} shows that it is also the most important case for signal-detection problems.

\subsection{Stabilizing sequences for Cauchy limits}\label{secCauchy}

The main limit theorems for the boundary-parameter problem considered in this paper are developed here and in \S \ref{L-R_limit}. They may be of standalone interest, and may have other applications.

In order for a distribution $F$ with tail probability $\bar F(x) = 2C_1/(xL(x))$ to have a finite mean, it is necessary and sufficient that the tail contribution be finite.
Integration by parts gives
\begin{eqnarray*}
	\int_T^\infty x \, dF(x) &=& T \bar F(T) + \int_T^\infty \bar F(x)\, dx , \\
	&=& \frac {2C_1} {L(T)} + 2C_1 \int_T^\infty \frac{dx} {x L(x)}.
\end{eqnarray*}
For example, $L(x) = (\log x)^{1+\delta}$ suffices for finiteness
only if $\delta > 0$.
While both terms tend to zero for large~$T$, the second term is dominant.

For $\alpha=1$, the scaling sequence \eqref{Bn_sequence} is $B_n \sim n L^\dagger(n)$. Finiteness of the mean implies $L(n) \to \infty$, and hence $B_n/n \sim L^\dagger(n)\to 0$ as $n\to\infty$.
The centering sequence given in the footnote to Theorem~2 of Gnedenko and Kolmogorov (1954, section~35) is
\begin{equation}\label{eqAn}
A_n \sim n \Im \log \psi(1/B_n) \sim n \int \sin(x/B_n) \, dF(x).
\end{equation}
where $\Im c$ denotes the imaginary part of $c$. See also equation~(6) in Chow and Teugels (1979). We now find an approximation for this sine-integral for a broad class of functions $L(x)$, conveniently parametrized in order to recover several important cases arising in Gaussian mixture models.

\begin{theorem}\label{thmSinIntegral}
	Let $F$ be a finite-mean distribution on the positive real line whose tail is $\bar F(x) \sim 2C_1/(x L(x))$, where $C_1=1/\pi$ and the slowly varying function $L(x)$ admits the parametrization
	\begin{equation}\label{eqLParam}
	L(x) = \left\{ \begin{array}{ll}(\beta_0\log x)^{\delta + 1} e^{(\beta_1\log x)^\gamma} & \beta_1> 0 \\
	(\beta_0\log x)^{\delta + 1} & \beta_1 = 0,
	\end{array}\right.
	\end{equation}
	for $\beta_0>0$. Finiteness of the mean implies either $\beta_1>0$ and $0 < \gamma < 1$,	or $\beta_1 = 0$ and $\delta > 0$. The sine-integral for large $T = 1/|t|$ is,
	\begin{eqnarray*}
		\int_0^\infty \sin (t x)  \, dF(x) 
		&=& \mu t - K_{\delta,\gamma,\beta_1}  \frac{t\,(\log T)^{1-\gamma}} {L(T)} +o\biggl(\frac {t (\log T)^{1-\gamma}} {L(T)} \biggr), \quad |t|\rightarrow 0,
	\end{eqnarray*}
	where $\mu$ is the mean, $K_{\delta,\gamma, \beta_1} = 2C_1/ (\beta_1^{\gamma}\gamma)$ for $\gamma, \beta_1 > 0$, and $K_{\delta,0,0} = 2C_1/\delta$.
\end{theorem}

\begin{remark}\label{remarkMaxSk}
	The form $\bar F(x) = 2C_1/(x L(x))$ implies by \eqref{tail_prob} that $F$ belongs to the domain of attraction of the (maximally skew) Cauchy distribution. The parametrization \eqref{eqLParam} of the slowly varying function $L(x)$ is an encompassing form recovering many important examples as special cases; see \S \ref{secTwoTails}.
\end{remark}

\begin{proof}
	The argument detailed in Appendix \ref{appSinIntegral} shows that the dominant component of the sine-integral for small~$t$ is
	\begin{eqnarray*}
		\int_0^\infty \sin(tx)\, dF(x) &=& t \mu - t \int_T^\infty x \, dF(x)  + O\biggl(\frac t {L(T)} \biggr), \\
		&=& t \mu - 2C_1 t \int_T^\infty \frac{dx}{xL(x)}  + O\biggl(\frac t {L(T)} \biggr),
	\end{eqnarray*}
	where the remainder is of smaller order than the second term.
	
	For $\beta_1 = 0$, the second part is a straightforward integral
	\[
	\int_T^\infty \frac{dx}{xL(x)}=
	\int_T^\infty \frac {dx}{x\, (\beta_0\log x)^{\delta + 1}} = \frac1 {\beta_0\delta\,(\beta_0\log T)^\delta} = 
	\frac{\log T}{\delta L(T)}.
	\]
	For $\beta_1, \gamma > 0$, the transformation $u = (\beta_1\log x)^\gamma$ gives rise to a gamma-tail integral
	\begin{eqnarray*}
		\int_T^\infty \frac {e^{-(\beta_1\log x)^\gamma}\, dx} {x (\beta_0\log x)^{\delta+1} }
		&=&  \frac{1}{\beta_1 \gamma} \frac{\beta_1^{\delta+1}}{\beta_{0}^{\delta+1}} \int_{(\beta_1 \log T)^\gamma}^\infty u^{\delta/\gamma -1} e^{-u} du
		\sim \frac{(\beta_1 \log T)^{1-\gamma}} {\beta_1 \gamma\, L(T)}.
	\end{eqnarray*}
\end{proof}

\begin{corollary}\label{corollAn}
	For a distribution satisfying the conditions of Theorem \ref{thmSinIntegral}, the centering sequence \eqref{eqAn} is 
	\begin{eqnarray*}
		A_n &=& \frac {n \mu} {B_n} - K_{\delta,\gamma,\beta_1}  \frac{n (\log B_n)^{1-\gamma}}  {B_n L(B_n)} (1 + o(1)),\\
		&=& \frac {\mu} {L^\dagger(n)} - K_{\delta,\gamma,\beta_1}  \frac{(\log B_n)^{1-\gamma}} {L^\dagger(n) L(n L^\dagger(n))}(1 + o(1)), \\
		&=& \frac {\mu} {L^\dagger(n)} - K_{\delta,\gamma,\beta_1}(\log n)^{1-\gamma} (1 + o(1)).
	\end{eqnarray*}
	The third line follows from the definition of the de~Bruijn conjugate in \eqref{eqDB}.
\end{corollary}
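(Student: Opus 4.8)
The plan is to substitute the Cauchy-regime scaling into the sine-integral expansion of Theorem~\ref{thmSinIntegral} and then to clean up the result using \eqref{Bn_sequence} and the de~Bruijn relation \eqref{eqDB}. I would start from the representation $A_n \sim n\int\sin(x/B_n)\,dF(x)$ for the centering sequence recorded in section~\ref{secCauchy}, put $t = 1/B_n$ so that $T = B_n$ in Theorem~\ref{thmSinIntegral}, and multiply through by $n$. Since the error term in that theorem is of smaller order than the second (non-$\mu$) term, after multiplication by $n$ it is still of smaller order than the $n$-scaled second term and is absorbed into a relative factor $(1+o(1))$ on that term, which gives the first displayed line
\[
A_n = \frac{n\mu}{B_n} - K_{\delta,\gamma,\beta_1}\,\frac{n(\log B_n)^{1-\gamma}}{B_n\,L(B_n)}\,(1+o(1)).
\]

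The second line comes from the Cauchy scaling sequence $B_n \sim nL^\dagger(n)$, which is \eqref{Bn_sequence} with $\alpha = 1$. This gives $n/B_n \sim 1/L^\dagger(n)$ at once, and for the denominator of the correction term I would write $B_n = nL^\dagger(n)(1+o(1))$ and appeal to the locally uniform slow variation of $L$ to conclude $L(B_n) \sim L(nL^\dagger(n))$. Substituting these into the first line yields
\[
A_n = \frac{\mu}{L^\dagger(n)} - K_{\delta,\gamma,\beta_1}\,\frac{(\log B_n)^{1-\gamma}}{L^\dagger(n)\,L(nL^\dagger(n))}\,(1+o(1)).
\]

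For the third line, the denominator $L^\dagger(n)\,L(nL^\dagger(n))$ of the correction term is $\sim 1$ by the defining property \eqref{eqDB} of the de~Bruijn conjugate, taken at $x = n$. It then remains only to replace $\log B_n$ by $\log n$: since $L^\dagger$ is slowly varying one has $\log L^\dagger(n) = o(\log n)$, so $\log B_n = \log n + \log L^\dagger(n) \sim \log n$ and hence $(\log B_n)^{1-\gamma} \sim (\log n)^{1-\gamma}$. This produces the claimed form $A_n = \mu/L^\dagger(n) - K_{\delta,\gamma,\beta_1}(\log n)^{1-\gamma}(1+o(1))$.

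The one point that needs care is the bookkeeping of the error terms through these substitutions. The two surviving terms $\mu/L^\dagger(n)$ and $(\log n)^{1-\gamma}$ can be of genuinely different orders depending on $L$ --- for the exponential-type $L$ of Theorem~\ref{thmSinIntegral} the conjugate $L^\dagger$ is not simply $1/L$, and $1/L^\dagger(n)$ typically dominates --- so the factor $(1+o(1))$ must be understood as attaching to the correction term alone, not to $A_n$ as a whole. One must also check that composing $L$ with the perturbed argument $B_n = nL^\dagger(n)(1+o(1))$ genuinely preserves asymptotic equivalence, which is exactly where uniform slow variation and the estimate $\log\ell(x) = o(\log x)$ for slowly varying $\ell$ enter. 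Beyond this the argument is direct substitution, so I anticipate no serious obstacle.
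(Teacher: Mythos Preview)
Your proposal is correct and follows essentially the same approach as the paper, which presents the corollary without a detailed proof beyond the remark that the third line follows from~\eqref{eqDB}. Your argument---substituting $t=1/B_n$ into Theorem~\ref{thmSinIntegral}, multiplying by~$n$, then invoking $B_n\sim nL^\dagger(n)$, slow variation of~$L$, and the de~Bruijn identity---is exactly the intended derivation, and your added care about where the $(1+o(1))$ factor attaches and why $\log B_n\sim\log n$ fills in details the paper leaves implicit.
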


\begin{corollary}\label{corollXbar}
	For a distribution satisfying the conditions of Theorem \ref{thmSinIntegral}, the distribution of the sample average for large $n$ is
	\[
	\frac{\bar X_n - \mu} {L^\dagger(n)} = - K_{\delta,\gamma,\beta_1}  (\log n)^{1-\gamma} + \varepsilon + o_p(1),
	\]
	where $\varepsilon$ has the Cauchy limit with skewness $\beta=1$.
	Given the right-tail behaviour of the limit distribution,
	\[
	\Prob(\bar X_n > \mu) \sim 2 K_{\delta,\gamma,\beta_1}^{-1} (\log n)^{\gamma-1} / \pi, 
	\]
	which tends to zero at rate $\beta_1^\gamma \gamma (\log n)^{\gamma-1}$ if $\beta_1,\gamma > 0$, or $\delta/\log n$ if $\beta_1=0$.
\end{corollary}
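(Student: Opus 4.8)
The plan is to handle the two displays separately: the distributional identity is a mechanical substitution into the Cauchy limit theorem, whereas the tail probability $\Prob(\bar X_n>\mu)$ is where the real content lies.

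For the distributional statement I would start from the stable limit theorem of Sections~\ref{secLimitDist}--\ref{secCauchy}: since $F$ is supported on the positive half-line, $B_n^{-1}\sum_{i=1}^n X_i - A_n \Rightarrow \varepsilon$, where $\varepsilon\sim P_{1,1}$ is the maximally skew Cauchy limit, $B_n$ may be taken to equal $nL^\dagger(n)$ exactly, and $A_n$ is given by the third line of Corollary~\ref{corollAn}. Writing $\bar X_n = (B_n/n)\bigl(B_n^{-1}\sum X_i\bigr)$, multiplying through by $B_n/n = L^\dagger(n)$, and inserting $A_n = \mu/L^\dagger(n) - K_{\delta,\gamma,\beta_1}(\log n)^{1-\gamma}(1+o(1))$ yields
\[
\frac{\bar X_n - \mu}{L^\dagger(n)} = -K_{\delta,\gamma,\beta_1}(\log n)^{1-\gamma}(1+o(1)) + \varepsilon_n,\qquad \varepsilon_n\Rightarrow\varepsilon .
\]
Since $(\log n)^{1-\gamma}\to\infty$ for $\gamma<1$, the multiplicative $o(1)$ attached to the drift is immaterial at the level of the stated asymptotic, and this is the first display.

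For the probability statement I would pass back to the sum and write
\[
\Prob(\bar X_n>\mu)=\Prob\!\Bigl(S_n>n\mu\Bigr)=\Prob\!\left(\frac{S_n-A_nB_n}{B_n}>c_n\right),\qquad c_n:=\frac{n\mu-A_nB_n}{B_n}\sim K_{\delta,\gamma,\beta_1}(\log n)^{1-\gamma}.
\]
The subtlety is that $c_n\to\infty$, so weak convergence of $(S_n-A_nB_n)/B_n$ to $\varepsilon$ does not by itself identify $\Prob(\bar X_n>\mu)$; one needs the convergence, in ratio form, to persist out to the moderate-deviation scale $c_n$. I would supply this via the principle of a single big jump for sums whose summands have a right tail regularly varying of index $-1$: because the deviation $n\mu-A_nB_n \asymp B_n(\log n)^{1-\gamma}$ exceeds the stable normalisation $B_n$, one obtains $\Prob(S_n>n\mu)\sim n\bar F(n\mu-A_nB_n)$ --- which heuristically is just the statement that $\Prob(\bar X_n>\mu)\approx\Prob(\varepsilon>c_n)$, read off from the right tail $\Prob(\varepsilon>x)\sim 2C_1 x^{-1}$ of $P_{1,1}$. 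Substituting $\bar F(x)=2C_1/(xL(x))$ at $x=n\mu-A_nB_n\sim K_{\delta,\gamma,\beta_1}\,nL^\dagger(n)(\log n)^{1-\gamma}$, using $n/B_n\sim 1/L^\dagger(n)$, and replacing $L(x)$ by $L(nL^\dagger(n))\sim 1/L^\dagger(n)$ by the de~Bruijn identity~\eqref{eqDB} and the slow variation of $L$ (which absorbs the extra slowly-varying factor $(\log n)^{1-\gamma}$), the powers of $L^\dagger(n)$ cancel and
\[
\Prob(\bar X_n>\mu)\sim\frac{2C_1}{K_{\delta,\gamma,\beta_1}}(\log n)^{\gamma-1}=\frac{2}{\pi K_{\delta,\gamma,\beta_1}}(\log n)^{\gamma-1}.
\]
Inserting $K_{\delta,\gamma,\beta_1}=2C_1/(\beta_1^\gamma\gamma)$ for $\beta_1,\gamma>0$ collapses the right-hand side to $\beta_1^\gamma\gamma(\log n)^{\gamma-1}$, while $K_{\delta,0,0}=2C_1/\delta$ gives $\delta/\log n$, as claimed.

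The main obstacle is exactly the uniformity invoked in the previous paragraph: upgrading the distributional limit to $\Prob(S_n>n\mu)\sim n\bar F(n\mu-A_nB_n)$ requires a Nagaev-type large-deviation estimate for regularly-varying sums, together with a verification that the deviation scale $c_n\sim K_{\delta,\gamma,\beta_1}(\log n)^{1-\gamma}$ --- which grows only logarithmically, and only slightly above the stable normalisation $B_n$ --- lies within its range of validity. Everything else reduces to bookkeeping with slowly-varying functions and their de~Bruijn conjugates, already assembled in Section~\ref{secCauchy} and Corollary~\ref{corollAn}.
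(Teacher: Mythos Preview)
Your derivation of the first display is exactly what the paper has in mind: it is an immediate consequence of the stable limit theorem together with the expression for $A_n$ in Corollary~\ref{corollAn}, and the paper offers nothing beyond that.

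For the second display the paper is actually less careful than you are.  Its entire argument is the phrase ``Given the right-tail behaviour of the limit distribution'': it simply evaluates $\Prob(\varepsilon > c_n)\sim 2/(\pi c_n)$ at $c_n=K_{\delta,\gamma,\beta_1}(\log n)^{1-\gamma}$, silently treating the weak limit as if it were uniform out to a threshold that tends to infinity.  You correctly flag this as a moderate-/large-deviation issue and propose the single-big-jump route $\Prob(S_n>n\mu)\sim n\bar F(n\mu-A_nB_n)$, which after the de~Bruijn bookkeeping you carry out lands on the same answer.  So the two approaches coincide numerically; the difference is that yours names the missing uniformity and indicates how one would supply it, whereas the paper leaves the step implicit.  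Your residual caveat---that one must check the Nagaev-type estimate applies at the scale $c_n\asymp(\log n)^{1-\gamma}$, barely above the stable normalisation---is a fair one, and is precisely the rigor the paper does not provide; the heuristic is later corroborated by the order-statistic argument in Lemma~\ref{lem:sum-max-dependence} and Theorem~\ref{cjl_theorem}, where the event $\bar X_n>\mu$ is shown to be asymptotically equivalent to $\Xmax>T_n$.
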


\section{Left boundary behaviour for mixtures}\label{secTwoTails}

Consider the two-component mixture problem $f_\theta = (1-\theta)f_0 + \theta f_1$. Since the event $\hat\theta_n >0$ is equivalent to the event that the sample average of the density ratios $h(X_i)$ exceeds its expectation, the discussion of \S \ref{secLimitDist} has implications for the left boundary behaviour $\PP_0(\hat\theta_n >0)$ in view of equation \eqref{null_tail}. This section explains those implications via several perturbative examples in which the tail of $f_1$ departs from Gaussian in different ways. Example \ref{exampleNearlyNormal} illustrates how different tail properties of $f_1$ induce different limit laws for the sums of the density ratios, and thereby different boundary probabilities. Examples \ref{exampleGaussCauchy2} and \ref{exampleTailIndex} then show that even for $f_1$ inducing a skew-Cauchy domain of attraction, different rates are attained via the expression $L(\eta)=f_1(\xi(\eta))/\xi(\eta)$ in equation \eqref{null_tail}.
The conclusions of Examples \ref{exampleGaussCauchy2} and \ref{exampleTailIndex} are corollaries of Theorem \ref{thmSinIntegral} with the form of $L(\eta)$ in \eqref{eqLParam} deduced from the form of $f_1(\xi(\eta))/\xi(\eta)$.

\begin{example}\label{exampleNearlyNormal}
	Let $\nu > -1$, and suppose that the non-null generator has density
	\[
	f_1(x) \propto |x|^\nu \phi(x),
	\]
	so that $h(x) \propto |x|^{\nu}$ is even.
	For $\nu \neq 0$, both boundary points are extreme.
	
	Ultimate monotonicity requires $\nu > 0$, in which case $h(x) > \eta$ is equivalent to
	$|x| > \hbox{\rm const}\, \eta^{1/\nu}$.
	In that case, the tail behaviour of the density ratio is
	\[
	-\log \Prob_0 (h(X) > \eta) \simeq \hbox{\rm const}\, \eta^{2/\nu} + L(\eta),
	\]
	where $L$ is slowly varying. It follows that all moments exist and $h(X)$ belongs to the domain of attraction of the normal distribution.
	Thus, $\lim \Prob_0(\hat\theta_n > 0) = 1/2$ by the classical theory for boundary parameter problems.
	
	For $-1 < \nu < 0$, the ratio $h(x)\propto |x|^{\nu}$ is monotone decreasing in $|x|$, so condition \eqref{umc} is not satisfied.  However, the tail behaviour can be found by a simpler argument:
	\[
	\Prob_0(h(X) > \eta) = \Prob_0\bigl(|X| < \hbox{\rm const}\, \eta^{1/\nu}\bigr) \sim \hbox{\rm const}\, \eta^{1/\nu}.
	\]
	In other words, the tail is regularly varying with index $\alpha=-1/\nu > 1$.
	In this case, $h(X)$ belongs to the normal domain of attraction only if $\nu \ge -1/2$, in which case $\lim \Prob_0(\hat\theta_n > 0) = 1/2$.  Otherwise, if $-1 < \nu < -1/2$, the behaviour is nonstandard and the limit is $\lim\Prob_0(\hat\theta_n > 0)=1-1/\alpha = 1+\nu$.  \qed
\end{example}

\begin{example}\label{exampleGaussCauchy2}
	Consider a Gaussian mixture with $f_1$ the standard Cauchy density function. The density ratio $h(x) \sim \hbox{\rm const}\,x^{-2} e^{x^2/2}$ is not monotone in $|x|$, but it is monotone for $|x| > 1$, and thus ultimately monotone according to \eqref{umc}. The equation 
	$h(\xi) = \eta$ has an asymptotic solution
    \begin{equation}\label{eqAsympSol}
	\vhalf \xi^2 = \log\eta + o(\log \eta),
	\end{equation}
	so that $\xi^{-1}f_1(\xi) \sim (2\log \eta)^{-3/2}/\pi$, and the tail approximation \eqref{null_tail} is
	\[
	\Prob_0(h(X) > \eta) \sim \frac{2} {\pi \eta\, (2\log\eta)^{3/2}}.
	\]
	This is of the form in Theorem \ref{thmSinIntegral} with $(\beta_0, \beta_1, \delta, \gamma)=(2, 0, 1/2,0)$, leading by Corollary \ref{corollXbar} to the conclusion that 
    \begin{equation}\label{eqApproxGaussCauchy}
	\PP_0(\hat\theta_n>0)\sim \frac{1}{2\log n}. 
	\end{equation}
	\qed
\end{example}

\begin{example}\label{exampleTailIndex}
	Consider a Gaussian mixture in which the non-null generator has  tails satisfying \eqref{tail_index_class} with tail index $0\leq \kappa<1$. The tails are regularly varying for $\kappa=0$ and otherwise exponential. 
    Then $h(\cdot)$ is ultimately monotone, and the equation $h(\xi) = \eta$ has asymptotic solution
	\[
	\xi^2 = 2\log\eta + (\log\eta)^{\kappa} L((2\log\eta)^{1/2}) + o((\log\eta)^{\kappa}).
	\]
	For $\kappa < 1$, the composition $f_1(\xi(\eta))$ is slowly-varying as a function of~$\eta$, and the tail approximation \eqref{null_tail} is
	\[
	\Prob_0(h(X) > \eta)\sim \frac {2 f_1(\xi(\eta))} {\eta\, \xi(\eta)} \sim \frac{2 c_\kappa e^{-(2\log \eta)^\kappa}}{\eta \sqrt{2\log \eta}},
	\]
	where $c_\kappa$ is a normalizing constant. This is of the form in Theorem \ref{thmSinIntegral} with $(\beta_0, \beta_1, \delta, \gamma)=(2/(c_\kappa \pi)^2, 2, -1/2, \kappa)$, leading by Corollary \ref{corollXbar} to the conclusion that 
	\[
	\PP_0(\hat\theta_n>0)\sim \frac{2^\kappa \kappa}{(\log n)^{1-\kappa}}.
	\]
	In particular, for the Laplace distribution $f_1(x)= \frac{1}{2}e^{-|x|}$, $\kappa=1/2$ and the convergence rate is $(\sqrt{2\log n})^{-1}$. \qed
\end{example}

\begin{figure}
	\begin{center}	
        		\includegraphics[trim=0.37in 2.95in 0.85in 3.3in, clip,width=0.49\linewidth]{./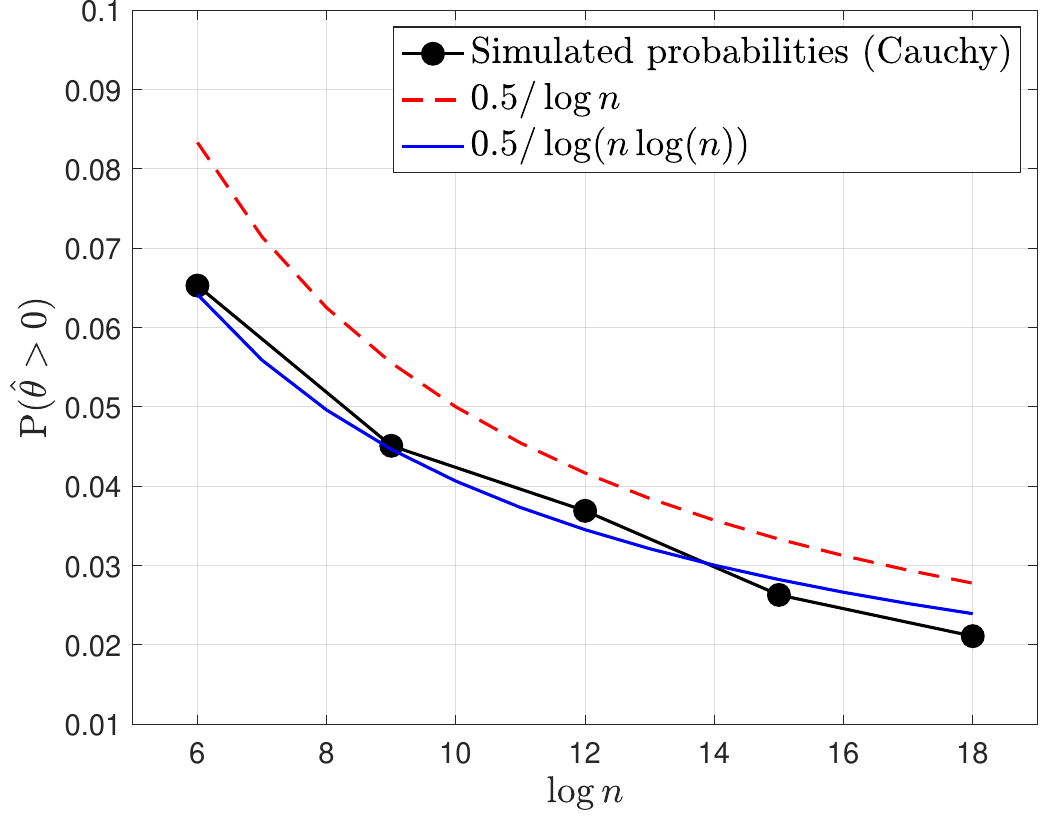}		
		\includegraphics[trim=0.43in 2.95in 0.85in 3.3in, clip,width=0.49\linewidth]{./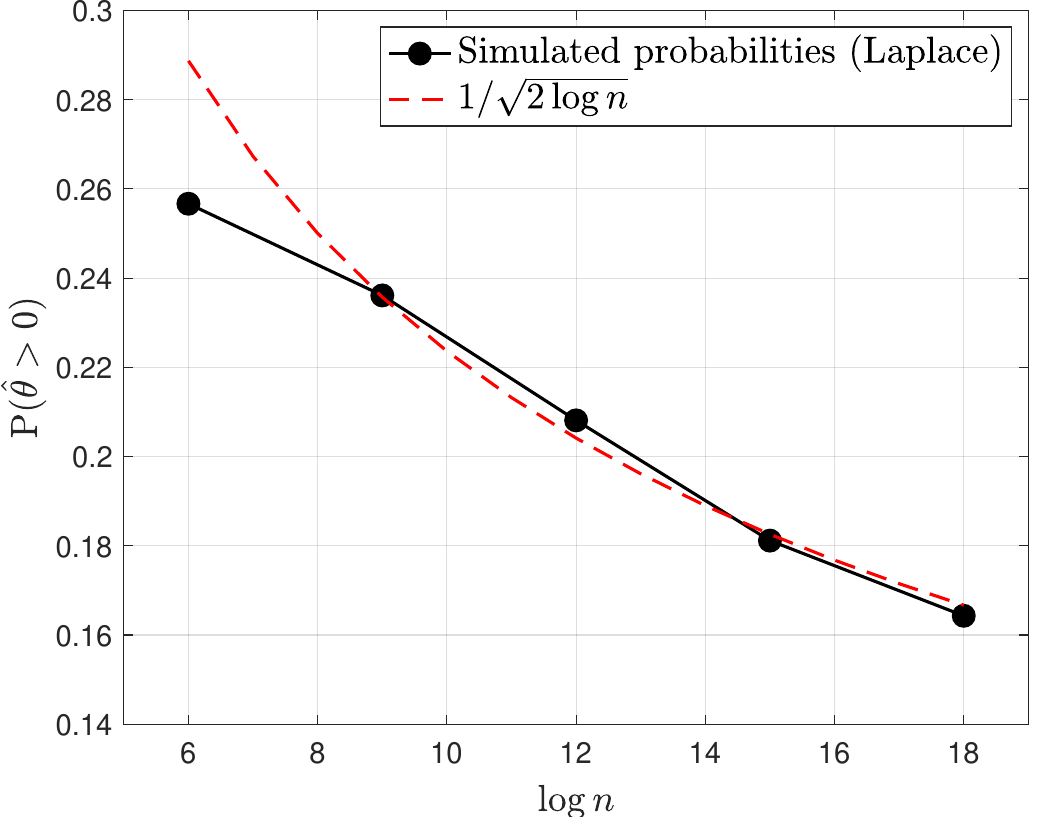}
	\end{center}
	\caption{Simulated probabilities $\Prob(\hat\theta_n>0)$ based on $2\times 10^4$ replicates, where $\hat\theta_n$ estimates $\theta$ in the model $(1-\theta)N(0,1)+\theta F_1$, where $F_1$ is standard Cauchy (left, Fig.~\ref{FigSim}a) and standard Laplace (right, Fig.~\ref{FigSim}b). \label{FigSim}}
\end{figure}

The conclusions of Examples \ref{exampleGaussCauchy2} and \ref{exampleTailIndex} are compared with simulated probabilities in Figure \ref{FigSim}. A referee points out that the accuracy of the approximation in Fig.~\ref{FigSim}a is substantially
inferior to that in Fig.~\ref{FigSim}b. This anomaly may be due to the fact that the default regular-variation approximation
for the Cauchy model
 \[
 \Prob_0(h(X) > \eta) \sim  \frac 2 {\pi \eta\, L(\eta)} = \frac 2 {\pi \eta\, (2 \log \eta)^{3/2}}
 \]
is not entirely satisfactory as an approximation to the tail probability
 \[
 \Prob_0(h(X) > \eta) = \frac 2 {\pi \eta\, (2 \log(\eta\log\eta))^{3/2}} \Bigl( 1 + O((\log\eta)^{-2}) \Bigr)
 \]
 (see Appendix \ref{appRefined}). The natural adjustment $L(\eta) \mapsto L(\eta\log\eta)$ leads to $\Prob(\hat\theta_n > 0) \simeq 1/(2\log(n\log n))$, which is a negligible modification from the vantage of regular variation, but Fig.~\ref{FigSim}a shows that it is an appreciable improvement for typical sample sizes. By contrast, no comparable adjustment is needed for the Laplace tail
 \[
 \Prob_0(h(X) > \eta) = \frac {2C e^{-(2\log\eta)^{1/2}}} {\pi \eta\, (2 \log \eta)^{1/2}} \Bigl( 1 + O((\log\eta)^{-1}) \Bigr). 
 \]

The previous examples illustrate asymptotic type 1 error rates in several instances where the tail of $f_1$ is non-Gaussian. In practice, $f_1$ often arises itself as a signal-plus-Gaussian-noise convolution. If the non-null component $F_1 = N(0,1) \star G$ is a convolution with a symmetric signal distribution $G$, then $F_1$ has tails at least as heavy as those of $G$. We formalize this claim below. 

\begin{proposition}
	\label{prop:tail-inheritance}
Let $0 \le \gamma < 2$, and let $G$ be symmetric with density $g$ satisfying
$-\log g(x) \sim |x|^\gamma L(x)$ for some slowly-varying function~$L$.
Then the Gaussian convolution $F_1 = N(0,1) \star G$ with density $f_1$ also satisfies
$-\log f_1(y) \sim |y|^\gamma L(y)$ for large~$y$. 
\end{proposition}

A proof is given in Appendix \ref{app:tail-prop}. It follows from the above result that if $G$ has tails satisfying \eqref{tail_index_class}, then so does the convolution $F_1=G \star N(0,1)$. Therefore, the distribution of the density ratio under the null remains in the Cauchy domain of attraction, implying that the type 1 error rates derived in Corollary \ref{corollXbar} apply to the case where the non-null generator is a signal-plus-Gaussian noise convolution.

\section{Extreme points and reparametrization}\label{secParam}

\subsection{General insights}

When the data are generated by $\mathbb{P}_\theta$, the left boundary behaviour is non-standard, while at the right boundary, $\theta=1$, the standard asymptotics apply. The distinction between the two cases is that the right boundary can typically be converted to an interior point after reparametrization, while the left boundary point cannot. 

Consider the set of all real $\theta$ in \eqref{eqMixtureModel} for which $f_\theta$ is a valid density function. This is a convex set whose extreme points are
\begin{align}
\begin{split}\label{eq:theta-max}
\theta_{\min} &= \min\{\theta\in\Real : (1-\theta)f_0(\cdot) + \theta f_1(\cdot) \ge 0\} \\
\theta_{\max} &= \max\{\theta\in\Real : (1-\theta)f_0(\cdot) + \theta f_1(\cdot) \ge 0\}.
\end{split}
\end{align}
When $f_1$ has heavier tails than $f_0$, the extreme points typically satisfy $\theta_{\min}=0$ and $\theta_{\max}>~1$, and the original model $\{f_\theta:0\leq \theta\leq 1\}$ is a strict subset of the extended model
\begin{equation}\label{eqExtended}
\conv(f_0, f_{\theta_{\max}}) = \{(1-\rho)f_0 + \rho f_{\theta_{\max}} : 0 \leq \rho \leq 1\}.
\end{equation}
Since $f_{\theta_{\max}}=(1 - \theta_{\max})f_0 + 	\theta_{\max}f_1$, we may express the convex combination $\conv(f_0, f_1)$ in the same parametrization as \eqref{eqExtended}, giving 
\[
\conv(f_0, f_1) =  \{(1-\rho)f_0 + \rho f_{1} : 0 \leq \rho \leq 1/\theta_{\max}\}.
\]
If $\theta_{\max}$ exceeds 1, the right boundary point $\theta=1$ in the original parametrization becomes an interior point in the new parameterization, explaining from a different perspective why the standard asymptotic theory applies in this case. Similar insights were obtained by \citet{PatraSen2016} in a multiple testing context. 

\subsection{Examples}

The following cases typify modelling assumptions used in the empirical Bayes approach to multiple testing and indicate the generality with which the previous conclusions hold.

\begin{example}\label{exampleGaussCauchy1}
	Let $f_0(x)=\phi(x)$ be the standard Gaussian distribution, and let $f_1(x)$ be the standard Cauchy distribution.
	Then $f_\theta$ has a density
	\[
	(1-\theta) \phi(x) + \theta f_1(x) = \phi(x) ( 1 - \theta + \theta h(x) ),
	\]
	where $h(x) = f_1(x)/\phi(x)$ is the density ratio.
	The positivity condition $f_\theta \geq 0$ implies 
	\begin{align*}
	\theta(h(x) - 1 ) \ge -1
	\end{align*}
	for all  $x, \theta$. On the subset for which $h(x) \ge 1$, this implies the lower bound
	\begin{equation}\label{alphamin}
	\theta_{\min} = \sup_{x : h(x) > 1}  \frac{-1} {h(x) - 1}  = 0,
	\end{equation}
	which is zero since $h$ is unbounded as $x\to\infty$.
	On the subset for which $h(x) \le 1$, positivity implies the upper bound
	\begin{equation}\label{alphamax}
	\theta_{\max}  = \inf_{x : h(x) < 1}  \frac{1} {1 - h(x)} =  \frac1 {1 - h(1)} \simeq 2.9218.
	\end{equation}
	The density $f_{\theta_{\max}}$ is illustrated in Figure \ref{fig:trimodal-density}.

\begin{figure}[t]
	\centering
	\includegraphics[width=0.65\textwidth]{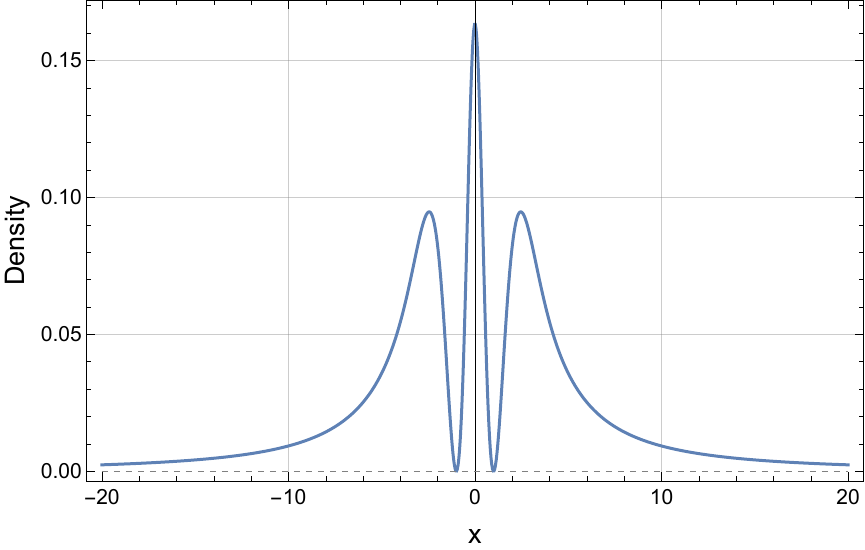}
	\caption{The density function $f_{\theta_{\max}}$ is symmetric and trimodal with zero density at $\pm1$ when $f_1$ is standard Cauchy and $f_0$ is standard normal. It is an extreme point relative to $N(0,1)$ in the sense of \eqref{eq:theta-max}.} 
	\label{fig:trimodal-density}
\end{figure}

As depicted in Figure \ref{fig:trimodal-density}, the level set $\{x \colon h(x) \le 1\}$ is a symmetric interval, approximately  $|x| \le 1.85$,
while the minimum in \eqref{alphamax} occurs at $x=\pm1$, and a local maximum $h(0) \simeq 0.7979$ at $x=0$. This is only partly typical for applied work in certain domains. \qed
\end{example}

The following example illustrates a broad family of symmetric distributions 
for which $h$~is convex and the minimum occurs at the origin, implying $\theta_{\max} = 1/(1 - h(0))$.

\begin{example}\label{exampleConvol}
	Signal-plus-noise model:
	Let $F_0 = N(0,1)$, let $G$ be a symmetric distribution on $\RR$, and let $F_1 = G \star F_0$ be the convolution with density
	\begin{align*}
		f_1(y) &= \phi(y) \int_\RR e^{y x - x^2\!/2}\, G(dx); \\
		h(y) = f_1(y)/\phi(y) &= \int_\RR e^{-x^2\!/2} \cosh(yx)\, G(dx).
	\end{align*}
	Symmetry of $G$ implies that $h(\cdot)$ is a positive combination of $\cosh$-functions,
	and hence that $h(\cdot)$~is symmetric and convex with a minimum at the origin.
	Provided that $G \neq \delta_0$, the minimum is unique, and the value $h(0) = \int e^{-x^2\!/2} \, G(dx)$ is strictly less than one.
	The argument used in Example~\ref{exampleGaussCauchy1} implies $\theta_{\text{min}} = 0$
	and $\theta_{\text{max}} = 1/(1 - h(0)) > 1$ for every symmetric convolution $F_1 = G\star N(0,1)$.
	The upper extremity $F_{\theta_{\text{max}}}$ has zero density at the origin. \qed
\end{example}

\section{Null limit distribution of test statistics}

\subsection{Likelihood-ratio statistic}\label{L-R_limit}
The likelihood-ratio statistic for testing $\theta = 0$ is
\[
\Lambda_n=2\sum_{i=1}^n \log(1+\hat\theta_n(h(X_i)-1)).
\]
Our goal here is to establish the asymptotic null distribution, particularly the conditional distribution given $\hat\theta_n > 0$.

The conventional arguments based on Taylor expansion \citep[see Appendices A.1 and A.3 of][]{Brazzale} do not apply because the $r$th log-likelihood derivative at zero is equal to $(-1)^{r-1}(r-1)!\sum_{i=1}^n(h(X_i)-1)^r$, where $(h(X_i)-1)^r$ belongs to the domain of attraction of the stable law with index $1/r$. The first derivative does not have a finite variance, so there is no concept of Fisher information.  Higher-order derivatives do not have an expectation, and are strongly dependent. 
Nevertheless, one series of simulations shown in the right panel of Figure 4 suggests that the asymptotic null distribution given $\hat\theta_n > 0$ is close to $\kappa_n \chi_1^2$, where  $\kappa_n \to 1$ tends to one. This section offers an explanation and a derivation of the correct limit distribution,
which is not~$\chi_1^2$ in the setting of Theorem~\ref{thmSinIntegral}.

In order to prove Theorem \ref{LR_limit} we require the following theorem concerning the joint distribution of a sample average $\bar Z_n$ and the maximum order statistic $\Zmax$, which is of independent interest. The switch in notation from $X$ representing a generic random variable in \S \ref{secLimitDist} to $Z$ here is because $X$ in the present section is used to represent a different object to that for which the theorem is to be applied. The joint limiting distribution arises in the calculation of the conditional limiting distribution of the likelihood ratio statistic conditional on positivity, which is shown in the proof of Theorem \ref{LR_limit} to have a dominant expression in terms of the top order statistic.

\begin{theorem}\label{cjl_theorem}
	Let $F$ be a zero-mean distribution with support $(-1, \infty)$ and tail satisfying the conditions of Theorem~\ref{thmSinIntegral}, and let $Z_1,\ldots, Z_n$ be an iid sample from $F$. Let $T_n = B_n|A_n| = K_{\delta,\gamma,\beta_1} B_n (\log n)^{1-\gamma}$, where $B_n=n/L(n)$ and $A_n$ is given in Corollary~\ref{corollAn}.
	Given $\bar Z_n > 0$, the conditional limit distribution of $(\bar Z_n, \Zmax)$ is such that
	\[
	\biggl(\frac{n\bar Z_n} {T_n},\, \frac{\Zmax} {T_n} \biggr) \sim \biggl(\frac U{1-U},\, \frac 1 {1-U} \biggr),
	\]
	where $U$ is uniform on $(0, 1)$.
	It follows that $n\bar Z_n/T_n \sim T_n / (n\bar Z_n)$ is conditionally self-reciprocal,
	the ratio $n\bar Z_n / \Zmax \sim U$ is conditionally uniform, 
	and the support of the joint distribution degenerates to the line $\Zmax/T_n - n\bar Z_n/T_n -1  \to 0$.
\end{theorem}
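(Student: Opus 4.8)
The guiding principle is the \emph{single big jump}: for summands in the Cauchy domain of attraction, the rare event $\{\bar X_n>0\}$ is realised, up to negligible probability, by one summand --- necessarily the maximum $\Xmax$ --- being large enough to overcome the deterministic negative drift of the remaining sum. Write $S_n=\sum_{i=1}^nX_i=n\bar X_n$. By Corollary~\ref{corollAn} with $\mu=0$, the standardised sum $S_n/B_n-A_n$ has the maximally skew Cauchy limit with $A_n\sim-K_{\delta,\gamma,\beta_1}(\log n)^{1-\gamma}$, so $B_nA_n=-T_n(1+o(1))$ and the key scale separations are $B_n/T_n=\bigl(K_{\delta,\gamma,\beta_1}(\log n)^{1-\gamma}\bigr)^{-1}\to0$ while $T_n\to\infty$ and $T_n/n\to0$. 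Set $p_n:=\Prob(\bar X_n>0)$, which by Corollary~\ref{corollXbar} satisfies $p_n\sim2C_1K_{\delta,\gamma,\beta_1}^{-1}(\log n)^{\gamma-1}$. From the tail $\bar F(x)=2C_1/(xL(x))$, using the uniform convergence theorem for the slowly varying $L$ of Theorem~\ref{thmSinIntegral} to obtain $L(T_nv)\sim L(T_n)\sim L(B_n)\sim n/B_n$ uniformly for $v$ in compact subsets of $(0,\infty)$, one gets the relation that will pin down the limit law:
\[
\frac{n\bar F(T_nv)}{p_n}\longrightarrow\frac1v\qquad(v>0).
\]

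The core of the argument is a one-big-jump decomposition. Fix $\epsilon\in(0,1)$ and call a summand \emph{large} if it exceeds $\epsilon T_n$. A union bound gives $\Prob(\text{at least two large summands})\le\binom{n}{2}\bar F(\epsilon T_n)^2=O\bigl((p_n/\epsilon)^2\bigr)=o(p_n)$. If there is no large summand, then $S_n$ equals the truncated sum $\sum_i(X_i\wedge\epsilon T_n)$, whose mean is $-T_n(1+o(1))$ --- this uses the truncated-tail asymptotic $\int_{\epsilon T_n}^\infty\bar F(x)\,dx\sim(\mathrm{const})\,(\log T_n)^{1-\gamma}/L(T_n)$ from the proof of Theorem~\ref{thmSinIntegral}, with $\log(\epsilon T_n)\sim\log T_n$ and $L(\epsilon T_n)\sim L(T_n)$ --- and whose variance is $O(\epsilon\,T_nB_n)$ via $\int_0^yx^2\,dF(x)\sim2C_1y/L(y)$; Chebyshev's inequality then gives $\Prob(S_n>0,\ \Xmax\le\epsilon T_n)=O(\epsilon\,B_n/T_n)=O(\epsilon\,p_n)$. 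Hence, on $\{S_n>0\}$ and up to an event of probability $o(p_n)+O(\epsilon p_n)$, there is exactly one large summand, namely $\Xmax$; conditioning (by exchangeability) on the index of the maximiser and on the value $\Xmax=x\gtrsim\epsilon T_n$, the other $n-1$ values are, up to a total-variation correction $O(n\bar F(x))=o(1)$, iid from $F$, so their sum $R_{n-1}:=S_n-\Xmax$ satisfies $R_{n-1}/T_n\to-1$ in probability, the fluctuations being of order $B_n=o(T_n)$. Since this persists after the further conditioning $\{S_n>0\}=\{R_{n-1}>-\Xmax\}$, which is automatic once $\Xmax\gtrsim T_n$, we obtain on $\{S_n>0\}$ that $\Xmax/T_n-(n\bar X_n)/T_n-1=-R_{n-1}/T_n-1\to0$.

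It remains to identify the conditional law of $\Xmax/T_n$. For $m>1$, splitting over the index of the maximiser and using $R_{n-1}/T_n\to-1$ to get $\Prob(R_{n-1}>-T_nm)\to1$,
\[
\Prob(\Xmax>T_nm,\ S_n>0)\sim n\bar F(T_nm)\sim m^{-1}p_n,
\]
so $\Prob(\Xmax/T_n>m\mid S_n>0)\to1/m$; for $m\le1$, a companion estimate --- splitting the range of $\Xmax$ and invoking the one-big-jump bound with $\epsilon$ fixed and then sent to $0$ --- gives $\Prob(\Xmax/T_n\le m,\ S_n>0)=o(p_n)$, hence $\Prob(\Xmax/T_n>m\mid S_n>0)\to1$. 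Therefore, given $\{S_n>0\}$, $\Xmax/T_n\to1/(1-U)$ in distribution with $U$ uniform on $(0,1)$, because $\Prob\bigl(1/(1-U)>m\bigr)=1/m$ for $m\ge1$. Together with the line relation of the previous paragraph, this yields the joint convergence $\bigl((n\bar X_n)/T_n,\ \Xmax/T_n\bigr)\to\bigl(1/(1-U)-1,\ 1/(1-U)\bigr)=\bigl(U/(1-U),\ 1/(1-U)\bigr)$. The three asserted consequences now follow from $U\overset{d}{=}1-U$: the first coordinate $U/(1-U)\overset{d}{=}(1-U)/U$ is its own reciprocal; the ratio of the coordinates equals $U$, hence is uniform; and their difference minus one vanishes identically.

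The main obstacle is the one-big-jump estimate itself --- establishing $\Prob(S_n>0,\ \Xmax\le\epsilon T_n)=o(p_n)$ after letting $\epsilon\downarrow0$. This requires the Karamata-type asymptotics for the truncated first and second moments of the regularly-varying-tailed $F$ to be sharp enough that the Chebyshev bound beats $p_n\asymp(\log n)^{\gamma-1}$ up to a factor $O(\epsilon)$, and it is precisely here that the restricted, logarithmic-scale form of $L$ assumed in Theorem~\ref{thmSinIntegral} is essential --- both to secure $L(\epsilon T_n)\sim L(T_n)$ and to produce the clean constant in $n\,\EE[X\wedge\epsilon T_n]\sim-T_n$. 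A secondary technical point is the uniformity in $v$ of the displayed relation $n\bar F(T_nv)/p_n\to1/v$ and the negligibility, for $\Xmax$ of order $T_n$, of conditioning the non-maximal summands to lie below $\Xmax$.
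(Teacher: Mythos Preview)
Your argument is correct and reaches the same conclusion, but the route differs from the paper's. The paper first establishes a preparatory lemma via the exponential-spacings representation of the upper order statistics: writing $X_{(n-k)}\simeq B_n/(e_0+\cdots+e_k)$ with iid unit exponentials $e_j$, the event $X_{(n)}>yT_n$ forces $e_0$ to be unusually small, so the conditional law of $X_{(n-1)},X_{(n-2)},\ldots$ coincides with the unconditional law of $X_{(n)},X_{(n-1)},\ldots$, and Corollary~\ref{corollXbar} gives $\sum_{k\ge1}X_{(n-k)}=-T_n+O_p(B_n)$. The theorem then follows by computing the joint survival function from this lemma together with $\Prob(X_{(n)}>yT_n)/\Prob(\bar X_n>0)\to1/y$. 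You instead run a direct one-big-jump decomposition in the style of large-deviation theory for subexponential sums: bound the probability of two summands exceeding $\epsilon T_n$ by $O(p_n^2)$, control $\Prob(S_n>0,\ X_{(n)}\le\epsilon T_n)$ by $O(\epsilon\,p_n)$ via Chebyshev on the $\epsilon T_n$-truncated sum, and deduce that on $\{S_n>0\}$ exactly one summand is large, with the remainder summing to $-T_n+o_p(T_n)$. Your method avoids the order-statistic machinery entirely; in exchange it leans on the Karamata truncated-moment asymptotics you flag at the end, and in particular on the parametrised form of $L$ in Theorem~\ref{thmSinIntegral} to ensure $L(\epsilon T_n)\sim L(B_n)$ even though $T_n/B_n\to\infty$. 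Both proofs ultimately hinge on the same two ingredients---the line relation $X_{(n)}/T_n-n\bar X_n/T_n-1\to0$ and the Pareto conditional marginal $\Prob(X_{(n)}/T_n>m\mid S_n>0)\to1/m$---so the difference is one of technique rather than substance.
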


\begin{remark}
	The unconditional scaling factor for both $n\bar Z_n$ and $\Zmax$ is $B_n$, so
	$(n\bar Z_n/B_n - A_n, \Zmax/B_n)$ has a joint limit distribution, which has been studied by
	Chow and Teugels (1979) in this setting.
	However, the conditioning event $\bar Z_n > 0$ has probability $O(B_n/T_n)$ tending to zero, so
	the joint limit distribution does not determine the limit of conditional distributions.
	Theorem~\ref{cjl_theorem} implies that $n\bar Z_n > \epsilon T_n$ and $\Zmax > (1+\epsilon)T_n$ are asymptotically equivalent events for each $\epsilon > 0$.
\end{remark}

To prove the theorem, we first state a technical lemma that relates the sample mean to the sample maximum in the event where the latter is unusually large.

\begin{lemma}
	\label{lem:sum-max-dependence}
	Let $F$ be a zero-mean distribution with support $(-1, \infty)$ and tail satisfying the conditions of Theorem~\ref{thmSinIntegral}, and let $Z_1,\ldots, Z_n$ be an iid sample from $F$. Let $B_n,T_n$ be defined as in Theorem \ref{cjl_theorem}. For any fixed $y>0$, conditional on $Z_{(n)}>y T_n$, we have
	\begin{align}
	\label{eq:sum-max-dependence}
	\frac{n\bar{Z}_{n}}{T_n} = \frac{Z_{(n)}+\sum_{k\geq 1}Z_{(n-k)}}{T_n} = \frac{Z_{(n)}}{T_n}-1+o_p(1).
	\end{align}
	Consequently, for any fixed $y>1$
	\begin{align*}
	\lim_{n\to\infty} \; &\Prob\left( \bar{Z}_n>0 \mid Z_{(n)}>yT_n \right) = 1,
	\end{align*}
	so that $\Prob(\bar{Z}_n>0, Z_{(n)}>yT_n) \sim \Prob(Z_{(n)}>yT_n)$ as $n\to\infty$.
\end{lemma}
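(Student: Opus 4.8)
The plan is to reduce the conditional claim to the unconditional fact that $n\bar X_n/T_n\to-1$ in probability, via a total-variation coupling that identifies the $n-1$ non-maximal order statistics with a plain iid sample. The one input that drives everything is $n\bar F(yT_n)\to0$ for each fixed $y>0$: by construction $n\bar F(B_n)\to 2C_1$, while $T_n/B_n=K_{\delta,\gamma,\beta_1}(\log n)^{1-\gamma}\to\infty$ because $\gamma<1$; writing $yT_n=\lambda_nB_n$ with $\lambda_n\to\infty$ and using that $\bar F(x)=2C_1/(xL(x))$ with $L$ eventually increasing gives $n\bar F(yT_n)=n\bar F(B_n)\,\lambda_n^{-1}\,L(B_n)/L(\lambda_nB_n)\le n\bar F(B_n)/\lambda_n\to0$.

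First I would record the unconditional limit. By the $\alpha=1$ stable limit theorem of the preceding subsections, $B_n^{-1}\sum_{i=1}^n X_i-A_n$ converges in distribution to the maximally-skew Cauchy law; since $B_nA_n\sim-T_n$ (because $A_n<0$ by Corollary~\ref{corollAn}, the mean vanishing in the setting of Theorem~\ref{cjl_theorem}) and $B_n/T_n\to0$, it follows that $T_n^{-1}\sum_{i=1}^n X_i=-1+O_p(B_n/T_n)\to-1$ in probability, and likewise $T_n^{-1}\sum_{j=1}^{n-1}X_j\to-1$ since $T_{n-1}/T_n\to1$ and any single summand is $o_p(T_n)$.

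Next comes the conditional representation. Conditioning additionally on the value $X_{(n)}=m$, exchangeability shows that the remaining order statistics are distributed as the order statistics of $n-1$ iid draws from $F$ restricted to $(-\infty,m)$; writing $Y_1^{(m)},\dots,Y_{n-1}^{(m)}$ for such draws and integrating over the conditional law of $M=X_{(n)}$ on $(yT_n,\infty)$,
\[
\Prob\Big(\tfrac{1}{T_n}\textstyle\sum_{k\ge1}X_{(n-k)}\in A\ \Big|\ X_{(n)}>yT_n\Big)=\EE_M\Big[\Prob\Big(\tfrac{1}{T_n}\textstyle\sum_{j=1}^{n-1}Y_j^{(M)}\in A\Big)\Big]
\]
for every Borel set $A$. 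The total-variation distance between $\mathcal L(Y_1^{(m)},\dots,Y_{n-1}^{(m)})$ and $\mathcal L(X_1,\dots,X_{n-1})$ is at most $(n-1)\bar F(m)\le(n-1)\bar F(yT_n)\to0$, uniformly over $m>yT_n$. Taking $A=\{u:|u+1|>\varepsilon\}$ and combining this with the unconditional limit for $\sum_{j=1}^{n-1}X_j$ yields $T_n^{-1}\sum_{k\ge1}X_{(n-k)}\to-1$ in conditional probability, i.e.\ \eqref{eq:sum-max-dependence}. For the consequence, fix $y>1$: on $\{X_{(n)}>yT_n\}$ one has $n\bar X_n/T_n=X_{(n)}/T_n-1+o_p(1)>y-1+o_p(1)$ with $y-1$ a fixed positive constant, so $\Prob(\bar X_n>0\mid X_{(n)}>yT_n)\to1$, and multiplying through by $\Prob(X_{(n)}>yT_n)$ gives the stated asymptotic equivalence.

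The main thing to get right is organisational: the conditional description of the non-maximal order statistics, and the fact that the total-variation bound $(n-1)\bar F(m)$ is uniform over the conditioning range because $\bar F$ is decreasing. There is no genuine analytic obstacle — in particular no variance estimate for the truncated variables $Y_j^{(m)}$ is needed, the coupling sidestepping both the heavy right tail and the Pareto-type spread of $X_{(n)}/T_n$ under the conditioning, which would otherwise force a delicate truncation of the range of $m$.
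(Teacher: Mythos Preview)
Your argument is correct and complete. The route differs from the paper's: the paper invokes the R\'enyi/exponential representation of order statistics, writing $X_{(n-k)}\sim B_n/(e_0+\cdots+e_k)$ with iid unit exponentials and arguing heuristically that, on $\{X_{(n)}>yT_n\}$, the leading exponential $e_0$ is negligible so that $(X_{(n-1)},X_{(n-2)},\dots)$ behaves like an unconditioned sample $(X_{(n)},X_{(n-1)},\dots)$, whence $\sum_{k\ge1}X_{(n-k)}=-T_n+O_p(B_n)$ by Corollary~\ref{corollXbar}. You reach the same conclusion by a direct total-variation coupling: conditionally on $X_{(n)}=m$, the remaining $n-1$ values are iid from $F$ truncated at $m$, and the product-law TV distance to an untruncated iid sample is at most $(n-1)\bar F(m)\le(n-1)\bar F(yT_n)\to0$, uniformly over the conditioning range. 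Both proofs rest on the same unconditional input $n\bar X_n/T_n\to-1$ from Corollary~\ref{corollXbar}; your version is more self-contained and makes the approximation step fully rigorous without the order-statistic machinery, at the modest cost of not exhibiting the finer $O_p(B_n)$ remainder that the paper records in~\eqref{eq16}.
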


\begin{proof}
	Let $Z_{(n-k)}$ be the $k$th order statistic with $Z_{(n+1)} = \infty$.
	Since the transformed variables $F(Z_1), \ldots, F(Z_n)$ are independent uniform, the joint distribution of the top order statistics is such that successive differences of the probability-integral transformed values have exactly the same distribution as successive spacings of the top~$k+1$ of $n$ independent uniform order statistics, i.e.
	\[
	n \{\bar F(Z_{(n-r)}) - \bar F(Z_{(n-r+1)}) \}_{0\le r \le k}
	\; \stackrel{(d)}{\to} \, (e_0, \ldots, e_k),
	\]
     where $e_0, \ldots, e_k$ are independent unit exponential variables, whose distribution approximates that of the spacings among the top $k+1$ uniform order statistics, for large $n$ and $k\ll n$. Since $u:=~\bar F(z) \sim  2C_1/(z L(z))$ for large $z$ implies $zL(z)\sim 2C_1/u$, and since the property $L^\dagger(z L (z))L(z) \sim 1$ implies $L(z)\sim 1/L^\dagger(2C_1/u)$, the inverse function inherits the asymptotic behaviour
	\[
	\bar{F}^{-1}(u)\sim (2C_1/u)\; L^\dagger(2C_1/u), \quad  (u \rightarrow 0).
	\] 
	Now since $n\bar{F}(Z_{(n)}) \stackrel{(d)}{\to} e_0$ and 
	\[
	\bar F^{-1}(e_0/n) \;\sim (2C_1n/e_0) \;  L^\dagger(2C_1n/e_0)  \sim \frac{2C_1}{e_0} \; n L^\dagger(n) \sim 2C_1 B_n/e_0,
	\]
	it follows that $Z_{(n)}/B_n \stackrel{(d)}{\to}2C_1/e_0$. 
	Similarly, since $n\bar{F}(Z_{(n-k)}) \stackrel{(d)}{\to} e_0+\dots+e_k$, we obtain
	\begin{equation}\label{e_rep}
	\bar{F}^{-1}((e_0+\dots+e_k)/n) \sim \frac{2C_1 B_n}{e_1+\dots+e_k}
	\end{equation}
	which implies $Z_{(n-k)}/B_n \stackrel{(d)}{\to} 2C_1/(e_0+\dots+e_k)$.

	If $\Zmax > yT_n$ for some $y > 0$, then $e_0$~is unusually small compared with $e_1, e_2,\ldots$, so that $e_0 + \cdots + e_k \sim e_1 + \cdots + e_k$. This is because $e_1,\dots,e_k$ are unit exponential variables that are independent of $e_0$, and on the event $\{Z_{(n)}>yT_n\}$, we have
    \begin{align*}
        e_0 \sim n\bar{F}(Z_{(n)}) \leq n\bar{F}(yT_n) \sim \frac{2nC_1}{yT_n L(yT_n)} = \frac{2C_1 L(n)}{K_{\delta,\gamma,\beta_1} yL(yT_n)(\log n)^{1-\gamma}}
    \end{align*}
    by definition of $T_n=K_{\delta,\gamma,\beta_1} n (\log n)^{1-\gamma}/L(n)$. Since $L$ is slowly varying, $\frac{L(yT_n)}{L(n)} \sim 1$ as $n\to\infty$, so the right hand side of the above tends to zero.
    It follows that the conditional distribution of $Z_{(n-1)}, Z_{(n-2)},\ldots$ given $\Zmax > y T_n$ is approximately the same
	as the unconditional distribution of $\Zmax, Z_{(n-1)},\dots$.
	Since the top order statistics $Z_{(n-1)}, Z_{(n-2)},\ldots$ dominate the partial sum $n \bar Z_n - \Zmax$, on the event $\Zmax > y T_n$, Corollary \ref{corollXbar} applied with $\sum_{j \ge1} Z_{(n-j)}$ in place of $n \bar X_n$ implies
	\begin{equation}\label{eq16}
	n \bar Z_n = \Zmax + \sum_{j \ge1} Z_{(n-j)} \sim \Zmax - T_n + O_p(B_n),
	\end{equation}
	which implies \eqref{eq:sum-max-dependence}. For $y>1$, $B_n = o(T_n)$ implies
	\begin{align*}
	\Prob(n\bar{Z}_n > 0 \mid Z_{(n)}>yT_n) \to 1.
	\end{align*}
\end{proof}

\begin{proof}[Proof of Theorem \ref{cjl_theorem}]
	For $U \sim \text{Uniform}(0,1)$,
	\begin{align*}
	\Prob\left( \frac{U}{1-U}>z ,\, \frac{1}{1-U}>y \right) = \min\left\{\frac{1}{1+z} , \frac{1}{y} \right\},\hspace{2em} y> 1, z> 0.
	\end{align*}
	By the last line of Lemma \ref{lem:sum-max-dependence}, we have
	\begin{align*}
	\Prob\left( \frac{n\bar{Z}_n}{T_n}>z,\frac{Z_{(n)}}{T_n}>y \mathrel{\Big |} \bar{Z}_n>0\right) &\sim \Prob\left( \frac{n\bar{Z}_n}{T_n}>z \mathrel{\Big |} \frac{Z_{(n)}}{T_n}>y, \bar{Z}_n>0 \right) \frac{\Prob\left(Z_{(n)}/T_n>y\right)}{\Prob(\bar{Z}_n>0)}.
	\end{align*}
	For any fixed $y$, the tail property of $\bar{F}$ implies
	\[
	\Prob (Z_{(n)} > yT_n) \sim n\bar{F}(yT_n) \sim \frac{2C_1 B_n}{y T_n} ,
	\]
	and Corollary \ref{corollXbar} implies $
	\Prob(\bar{Z}_n>0) \sim 2C_1 B_n/T_n$. It follows that for any fixed $y$,
	\begin{align}
	\label{eq:prob-ratio}
	\lim_{n \to \infty} \frac{\Prob(Z_{(n)}/T_n>y)}{\Prob(\bar{Z}_n>0)} \to \frac{1}{y}.
	\end{align}
	
	Since $Z_{(n)}/T_n>y$ for $y>1$ implies $\bar{Z}_n>0$ with probability 1 in the limit, we have
	\begin{align*}
	\Prob\left( \frac{n\bar{Z}_n}{T_n}>z \mathrel{\Big |} \frac{Z_{(n)}}{T_n}>y, \bar{Z}_n>0 \right) &\sim \Prob\left( \frac{n\bar{Z}_n}{T_n}>z \mathrel{\Big |} \frac{Z_{(n)}}{T_n}>y \right) .
	\end{align*}
	Lemma \ref{lem:sum-max-dependence} implies that, conditional on $Z_{(n)}>y T_n$, 
	\begin{align*}
	\frac{n\bar{Z}_{n}}{T_n} = \frac{Z_{(n)}+\sum_{k\geq 2}Z_{(n-k+1)}}{T_n} \sim \frac{Z_{(n)}}{T_n}-1+o_p(1),
	\end{align*}
	so the previous expression is, by the argument leading to \eqref{eq:prob-ratio},
	\begin{align*}
	\Prob\left( \frac{n\bar{Z}_n}{T_n}>z \mathrel{\Big |} \frac{Z_{(n)}}{T_n}>y \right) &\sim \Prob\left( \frac{Z_{(n)}}{T_n}>1+z \mathrel{\Big |} \frac{Z_{(n)}}{T_n}>y \right) \to \begin{cases}
	\frac{y}{1+z} \hspace{1em} &\text{if }1+z>y \\
	1 &\text{otherwise}.
	\end{cases}
	\end{align*}
	Together with \eqref{eq:prob-ratio}, this proves the claim.
\end{proof}

\begin{theorem}\label{LR_limit}
	Let $X_1,\ldots, X_n$ be independent standard Gaussian,
	let $h(X_i) \ge 0$~be in the domain of attraction of the maximally skew Cauchy law, and
	let $G(\cdot)$ be the cumulative distribution function whose $u$th quantile is
	\begin{equation}\label{u-quantile}
	G^{-1}(u) = -2u - 2\log(1-u) = 2\sum_{r \ge 2} u^r/r.
	\end{equation}
	Then, the conditional limit distribution of the likelihood-ratio statistic is
	\[
	\lim_{n\to \infty} \Prob_0(\Lambda_n \le x \given \Lambda_n > 0) = G(x).
	\]
\end{theorem}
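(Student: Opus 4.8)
The plan is to express the likelihood‑ratio statistic, through the profiled estimator, as a continuous function of the two quantities controlled by Theorem~\ref{cjl_theorem}, and then pass to the limit. Write $Y_i = h(X_i)-1$, so that $E_0 Y_i = 0$, $Y_i \ge -1$, and the right tail of $Y_i$ is $\bar F(x)\sim 2C_1/(xL(x))$ of the form required by Theorems~\ref{thmSinIntegral} and~\ref{cjl_theorem}; let $i^\star$ index the maximal value, $M=Y_{i^\star}=\Xmax$, and let $B_n,T_n$ be as in Theorem~\ref{cjl_theorem}. Strict concavity of $l(\theta)=\sum_i\log(1+\theta Y_i)$ (section~\ref{secRegular}) makes the events $\{\Lambda_n>0\}$, $\{\hat\theta_n>0\}$ and $\{\bar Y_n>0\}$ agree up to a null event, and on that event $\hat\theta_n$ is the unique interior root of the score $l'(\theta)=\sum_i Y_i/(1+\theta Y_i)$. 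By Theorem~\ref{cjl_theorem}, conditionally on $\bar Y_n>0$,
\[
\Bigl(\frac{n\bar Y_n}{T_n},\ \frac{M}{T_n}\Bigr)\ \Rightarrow\ \Bigl(\frac{U}{1-U},\ \frac{1}{1-U}\Bigr),\qquad U\sim\mathrm{Uniform}(0,1),
\]
the limit being carried by the line on which the second coordinate exceeds the first by exactly $1$. Note also that $B_n=o(T_n)$ by construction (since $T_n/B_n=K_{\delta,\gamma,\beta_1}(\log n)^{1-\gamma}\to\infty$) and $n=o(T_n^2)$ because $L^\dagger$ is slowly varying.

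The first step is to locate $\hat\theta_n$. From the identity $l'(\theta)=l'(0)-\theta\sum_i Y_i^2/(1+\theta Y_i)$, isolating the maximal summand and writing $\theta=s/M$,
\[
l'(s/M)=n\bar Y_n-\frac{sM}{1+s}-\frac{s}{M}\sum_{i\neq i^\star}\frac{Y_i^2}{1+(s/M)Y_i}.
\]
On $\{\bar Y_n>0\}$ the exponential–spacing description of the top order statistics in the proof of Lemma~\ref{lem:sum-max-dependence} gives $Y_{(n-1)}=O_p(B_n)$ and $\sum_{i\neq i^\star}Y_i^2=O_p(B_n^2+n)$, so (using $0\le Y_i^2/(1+(s/M)Y_i)\le 2Y_i^2$ uniformly for $s$ in a compact set once $M$ is large) the last term is $O_p\bigl(s(B_n^2+n)/T_n\bigr)=o_p(T_n)$ uniformly in $s$. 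Dividing by $T_n$ and invoking the joint convergence above,
\[
\frac{l'(s/M)}{T_n}\ \Rightarrow\ \frac{U}{1-U}-\frac{s}{1+s}\cdot\frac{1}{1-U}=\frac{U-s(1-U)}{(1-U)(1+s)},
\]
which is strictly decreasing in $s$ with a simple zero at $s=U/(1-U)$. A routine sandwiching of the root using the monotonicity of $l'$ then shows that $\hat\theta_n M$ is tight given $\bar Y_n>0$ and
\[
1+\hat\theta_n M=\frac{M}{T_n}+o_p(1)\ \Rightarrow\ \frac{1}{1-U},
\]
jointly with the coordinates of Theorem~\ref{cjl_theorem}.

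The second step inserts this into $\Lambda_n=2\log(1+\hat\theta_n M)+2\sum_{i\neq i^\star}\log(1+\hat\theta_n Y_i)$. Every non‑maximal term satisfies $|\hat\theta_n Y_i|\le \hat\theta_n\max(Y_{(n-1)},1)=O_p(B_n/M)\vee O_p(1/M)=o_p(1)$ uniformly, so $|\log(1+\hat\theta_n Y_i)-\hat\theta_n Y_i|\le(\hat\theta_n Y_i)^2$ and $\sum_{i\neq i^\star}|\log(1+\hat\theta_n Y_i)-\hat\theta_n Y_i|\le \hat\theta_n^2\sum_{i\neq i^\star}Y_i^2=O_p\bigl((B_n^2+n)/T_n^2\bigr)=o_p(1)$. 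Hence
\[
\Lambda_n=2\log(1+\hat\theta_n M)+2\hat\theta_n(n\bar Y_n-M)+o_p(1)=2\log(1+\hat\theta_n M)+2(\hat\theta_n M)\Bigl(\frac{n\bar Y_n/T_n}{M/T_n}-1\Bigr)+o_p(1),
\]
and passing to the limit, $\hat\theta_n M\Rightarrow U/(1-U)$ and $(n\bar Y_n/T_n)/(M/T_n)\Rightarrow U$, so
\[
\Lambda_n\ \Rightarrow\ 2\log\frac{1}{1-U}+2\cdot\frac{U}{1-U}\,(U-1)=-2U-2\log(1-U)=G^{-1}(U).
\]
Since $G^{-1}$ in~\eqref{u-quantile} is the strictly increasing (hence continuous) quantile function of $G$, and the events $\{\Lambda_n>0\}$ and $\{\bar Y_n>0\}$ coincide, the continuous mapping theorem yields $\Prob_0(\Lambda_n\le x\mid\Lambda_n>0)\to\Prob(G^{-1}(U)\le x)=G(x)$.

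The step I expect to be the main obstacle is the location of $\hat\theta_n$ conditionally on the vanishing‑probability event $\{\bar Y_n>0\}$: one must transfer the exponential–spacing description of the top order statistics (Lemma~\ref{lem:sum-max-dependence}, equation~\eqref{e_rep}) from the unconditional regime to this conditional one, and then check that the several a priori divergent remainders — notably $\hat\theta_n\sum_{i\neq i^\star}Y_i^2=O_p\bigl((B_n^2+n)/T_n\bigr)$, which tends to infinity — are nevertheless $o_p(T_n)$. This rests entirely on the logarithmic gap $T_n/B_n=K_{\delta,\gamma,\beta_1}(\log n)^{1-\gamma}$ between the two stabilizing scales identified in Corollaries~\ref{corollAn}--\ref{corollXbar}. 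Once that gap is exploited, the remaining work is bookkeeping with the continuous mapping theorem.
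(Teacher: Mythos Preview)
Your proof is correct and follows essentially the same route as the paper: isolate the maximal summand $M=Y_{i^\star}$, linearize the remaining contributions using $\sum_{i\neq i^\star}Y_i^2=O_p(B_n^2)$ conditionally on $\bar Y_n>0$, and reduce $\Lambda_n$ to $-2R-2\log(1-R)$ with $R=n\bar Y_n/M$ uniform by Theorem~\ref{cjl_theorem}. The only cosmetic difference is that the paper packages the linearization as an approximate log likelihood $\tilde l(\theta)=\log(1+\theta M)+\theta(n\bar Y_n-M)$ and works with its explicit maximizer $\tilde\theta$, whereas you locate the exact $\hat\theta_n$ via the score equation before inserting it into the same decomposition; your version is slightly more explicit about the remainder control (in particular the use of $n=o(T_n^2)$ and $B_n=o(T_n)$), but the underlying argument is identical.
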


\begin{remark}
	The limit distribution is not dissimilar to $\chi_1^2$.  
	Both densities behave like $x^{-1/2}$ near the origin. The first four cumulants of $\chi_1^2$ are 1, 2, 8, and 96, while those of $G$ are 1, 7/3, 32/3, and 3194/45. However, for $X \sim G$, the log density of $X^{1/2}$ has a Taylor expansion
	\[
	\log\bigl(2 x g(x^2)\bigr) = -\frac{2x} 3 - \frac{5x^2}{36} - \frac{23 x^3}{810} - \frac{31 x^4}{6480} + O(x^5),
	\]
	which is essentially linear for $x < 1$, while the corresponding half-normal log density $\log(2/\pi)/2 - x^2/2$ is exactly quadratic and negative at zero.
	The difference between $2xg(x^2)$ and the degree-4 Taylor approximation is less than 1\% for $x < 3$.
\end{remark}

\begin{remark}
	As discussed in \S \ref{secStable}, the maximally-skew domain of attraction is the only one that is relevant for the two-component mixture problems we have in mind, and the proof is specific to that case through its dependence on Theorem \ref{cjl_theorem} and via Remark \ref{remarkMaxSk}.
\end{remark}

\begin{proof}
	For notational simplicity  we set $Z = h(X) - 1 \sim F$, so that $F$ is in the maximally-skew Cauchy domain of attraction with zero mean, and Theorem \ref{cjl_theorem} applies. Given $\bar Z_n > 0$, we construct an approximation for $l(\cdot)$ and its derivatives $l^{(r)}(0) = (-1)^{r-1} (r-1)! S_r$, where
	\[
	S_r = \sum_{i=1}^n Z_i^r =  \sum_{k\ge0} Z_{(n-k)}^r = Z_{(n)}^r + \sum_{k\ge1} Z_{(n-k)}^r.
	\]
	Both $n\bar Z_n$ and $Z_{(n)}$ are conditionally $O_p(T_n)$, while the conditional distribution of the remaining order statistics $Z_{(n-1)}, Z_{(n-2)},\ldots$ is asymptotically the same as the unconditional distribution of $Z_{(n)}, Z_{(n-1)},\ldots$.
	Since $Z_i^r$ is in the domain of attraction of the stable law with index $1/r$, the scaling constant is $B_n^r$, and we have
	\[
	S_r - Z_{(n)}^r =  \sum_{k\ge1} Z_{(n-k)}^r = \left\{
	\begin{array}{ll} n\bar Z_n - Z_{(n)} &\quad r=1, \\
	O_p(B_n^r) & \quad r > 1. \end{array} \right.
	\]
	It follows that $S_r = Z_{(n)}^r(1 + o_p(1))$ for $r \ge 2$. 
	
	Consider the log-likelihood function 
	\[
	\ell(\theta) = \sum_{i=1}^n \log\{(1-\theta)f_0(X_i) + \theta f_1(X_i)\},
	\]
	whose first two derivatives are given in \eqref{eqDeriv}. A first-order Taylor expansion around zero gives the local approximation
	\begin{equation}\label{eqLLL}
	\tilde l(\theta) = \log(1 + \theta Z_{(n)}) + \theta (S_1 - Z_{(n)}),
	\end{equation}
	where, in the Taylor expansion, we have substituted $Z_{(n)}$ for $S_1$ in the expression for $l'(0)$, and added and subtracted $\log(1+\theta Z_{(n)})=\theta Z_{(n)}+O(\theta^2)$. The approximation satisfies $l(\theta) - \tilde l(\theta) = o_p(1)$ for $\theta = O(T_n^{-1})$ owing to the form of the higher derivatives at zero. 
	
	Theorem \ref{cjl_theorem} shows that $R = S_1/Z_{(n)}$ is less than one with high probability for large~$n$, in which case $\tilde l$ has a maximum at $\tilde\theta_n Z_{(n)} = R/(1-R)$.  In that case $R = \tilde\theta Z_{(n)}/(1 + \tilde\theta Z_{(n)})$, and the approximate likelihood-ratio statistic
	\begin{equation}\label{llr_limit}
	2\tilde l(\tilde\theta) = -2R - 2\log(1 - R) = 2\sum_{k \ge 2} R^k/k = \Lambda_n + o_p(1)
	\end{equation}
	is a monotone function of~$R$.
	Theorem \ref{cjl_theorem} also shows that the limit distribution of $R$ given $\bar Z_n > 0$ is uniform on $(0, 1)$, implying that $\Lambda_n \sim G$ in the limit.
\end{proof}

Figure~4 shows a histogram of $R$ and the Bartlett-adjusted likelihood ratio statistic $(\Lambda_n/\hat\kappa_n)^{1/2}$ using the exact likelihood and the exact maximum in the Gauss-Cauchy mixture model restricted to 2000 out of 74058 samples for which $\bar Z_n > 0$.
For this model, $L(x) = (2\log x)^{3/2}$ is the slow-variation function, 
$B_n \sim  n /(2\log n)^{3/2}$ and $T_n \sim 4\pi^{-1} B_n \log n$.
For the simulation, $n = 10^7$,
$\hat\kappa_n = 1.0170$ is the sample average,
and $R > 1$ in 31 cases.
The sample cumulant ratios $k_2/2$ and $k_3/8$ for $\Lambda_n/\hat\kappa_n$ compared with $\chi_1^2$ are 1.160 and 1.346, which are surprisingly close to the theoretical limit values $7/6$ and $4/3$ respectively.
The new limit density is shown on the same square-root scale, together with the half-normal density for comparison.
While the difference between the two distributions is not large, the histogram clearly favours~$G$.
A standard 20-bin $\chi^2$-test gives $X^2=40.02$ for the $\chi_1^2$ distribution, and $X^2 = 15.22$ for~$G$.

\begin{figure}
	\begin{center}
		\includegraphics[trim=0.5in 2.95in 0.8in 3.2in, clip,width=0.49\linewidth]{./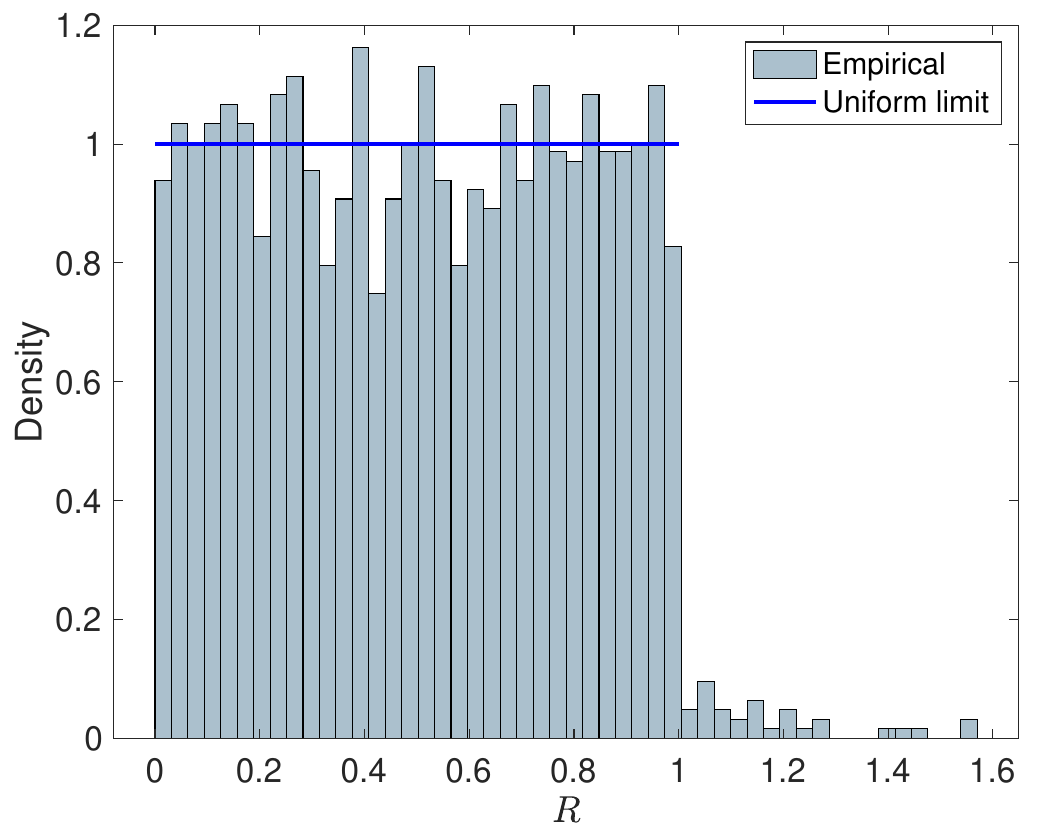}
		\includegraphics[trim=0.5in 2.95in 0.8in 3.2in, clip,width=0.49\linewidth]{./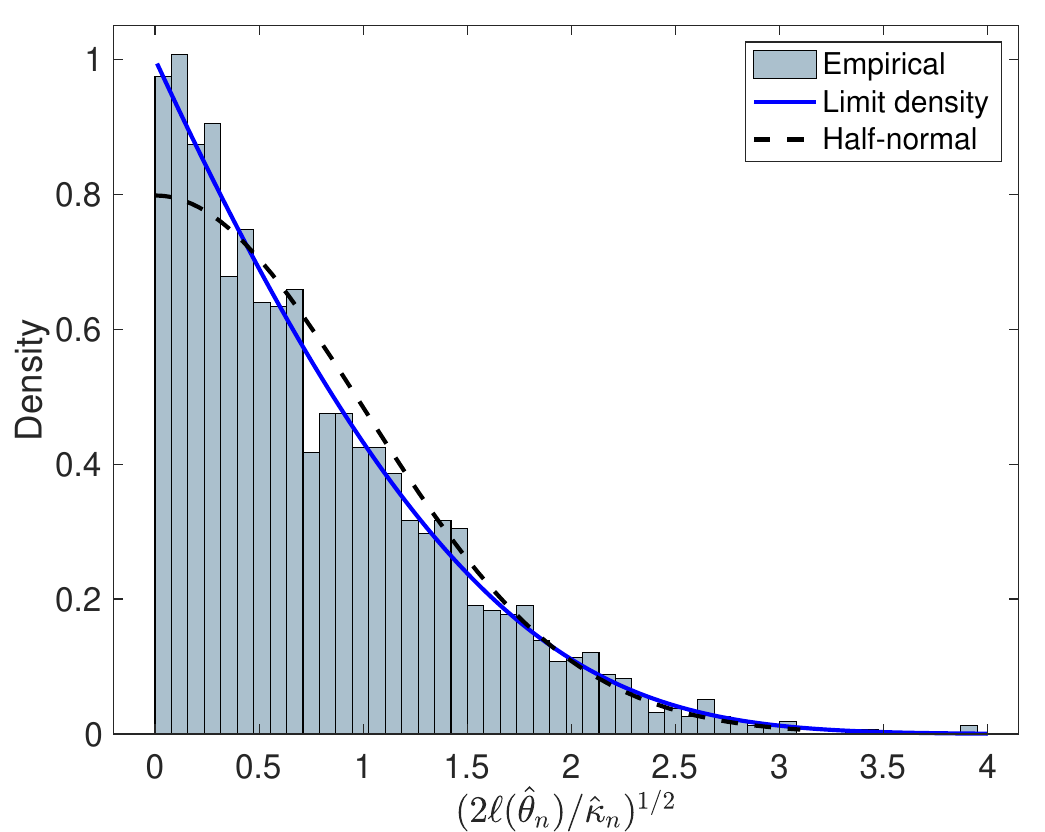}
	\end{center}
	\caption{Histogram of $R = n\bar Z_n/Z_{(n)}$ (left) and $(2l(\hat\theta_n)/\hat\kappa_n)^{1/2}$ (right) for two thousand simulations of the
		Gauss-Cauchy mixture model with $n=10^7$ observations restricted to samples for which $\bar Z_n > 0$.}
\end{figure}

Analogous simulations for the Gauss-Laplace mixture give virtually identical results for the likelihood-ratio statistic, though with $\hat\kappa_n = 0.9566$.
However, the fraction of $R$-values greater than one was 249/2000, or 12\%, consistent with a $(\log n)^{-1/2}$ convergence rate.
The argument used in the proof of Theorem~\ref{LR_limit} suggests that the rate of
convergence of $\Lambda_n$ to~$G$ might be $(\log n)^{-1/2}$ or perhaps $(\log n)^{-1}$, but the simulations suggest a faster rate, particularly after Bartlett correction.

The local approximation \eqref{eqLLL} and ensuing argument is valid for any iid problem with a boundary at zero provided that the log likelihood derivative is a sum of zero mean random variables in the Cauchy domain of attraction. There may therefore be other statistical problems besides the two-component mixture problem in which the conditional limit distribution \eqref{u-quantile} arises; the only difficulty in establishing that $l(\theta) - \tilde l(\theta) = o_p(1)$, as the adequacy of the approximation \eqref{eqLLL} depends on the behaviour of higher derivatives, which is context specific.  

\subsection{Wald and Rao statistics}

The likelihood ratio test discussed in \S \ref{L-R_limit} one of three classicial approaches to testing $\theta=0$. The conventional Wald and Rao statistics for testing the same hypothesis are
\[
\hat\theta (i(\hat\theta))^{1/2}, \quad \hbox{and}\quad \frac{l'(0)} {i(0)^{1/2}},
\]
where $l(\theta)$ is the log likelihood function and $i(\theta)$ is the Fisher information. Technically speaking, the Rao statistic does not exist in the mixture setting because $i(0)$ is not finite.
For present purposes, however, we substitute $-l''(\theta)$ for $i(\theta)$ in both.

Given $\hat\theta > 0$, the approximate likelihood in section~4.3 implies
\begin{eqnarray*}
	{\tilde l}'(\theta) &=& \frac{Z_{(n)}} {1 + \theta Z_{(n)}} + n\bar Z_n - Z_{(n)}, \\
	Z_{(n)}\tilde\theta &=& R/(1-R), \\
	-{\tilde l}''(0) &=& Z_{(n)}^2, \\
	-{\tilde l}''(\tilde\theta) &=& Z_{(n)}^2(1 - R)^2 = (Z_{(n)} - n \bar Z_n)^2 ,
\end{eqnarray*}
where $R = n\bar Z_n/Z_{(n)}$.
It follows that the modified Wald and Rao statistics are
\begin{eqnarray*}
	\tilde\theta (-{\tilde l}''(\tilde\theta))^{1/2} &=& \frac R {(1-R) Z_{(n)}} Z_{(n)} (1-R) = R \\
	\frac{{\tilde l}'(0)} {(-{\tilde l}''(0))^{1/2}} &=& \frac {n \bar Z_n} {Z_{(n)}} = R.
\end{eqnarray*}
In other words, the two statistics are conditionally equivalent given $\hat\theta > 0$, both uniformly distributed on $(0, 1)$.
The squared statistics are not approximately the same as the likelihood-ratio statistic,
although all three are asymptotically equivalent in the sense that each is a monotone function of~$R$.  Conventional standard normal approximations are incorrect.

\section{Nonparametric tail classes}\label{secComposite}

\subsection{Composite mixtures}

In the empirical Bayes approach to multiple testing, a test statistic is constructed at each site, whose distribution is Gaussian if the site in question contributes only noise, thus treating $f_0$ as standard Gaussian is natural. In the presence of signal, however, no parametric distribution for the test statistic is guaranteed, thus it is common to leave it unspecified but from a distribution with tails heavier than Gaussian.

A composite Gaussian mixture is a  family of distributions
\[
\{F_\theta = (1-\theta)F_0 + \theta F_1 : 0 \le\theta\le 1,\, F_1 \in \F\}
\]
in which $F_0 = N(0,1)$.
The non-null distributions belong to some specified non-empty family~$\F$, which need not be convex.
Models of this type have been considered by Efron et al.~(2001), Patra and Sen (2016). In the modern style, these are invariably nonparametric in the counter-semantic sense that $\F$~is not a finite-dimensional manifold.

The emphasis in this section is on composite mixtures arising in the signal-detection setting where each distribution is
necessarily a Gaussian convolution $F_\theta = N(0, 1) \star P$ with some symmetric signal distribution~$P$.
This condition is sufficient to ensure that each $F_\theta$ has a bounded continuous density $f_\theta(x) \le \phi(0)$, and hence that
each distribution in~$\F$ also has a bounded continuous density.
Also, under suitable conditions (Proposition~\ref{prop:tail-inheritance}), the tails of $F_\theta$ are similar to those of~$P$.

\subsection{Equivalence classes}\label{equiv_class}

Let $F_1, F_2$ be two symmetric distributions having bounded continuous densities such that the  ratio has a finite non-zero limit:
\begin{equation}\label{eq_tail_density}
\lim_{x \to\infty} \frac{f_1(x)} {f_2(x)} = K.
\end{equation}
Then $h_1(x) \sim K h_2(x)$, where $h_r(x) = f_r(x)/\phi(x)$ is the density ratio arising in the elementary mixture model
$(1-\theta) \phi(x) + \theta f_r(x)$.
On the assumption that $h_1(X)$ (and hence also $h_2(X)$), belongs to the Cauchy domain of attraction, it follows that
\begin{eqnarray*}
	\frac 1 {B_{1,n}} \sum \bigl(h_1(X_i) - 1\bigr) &=& A_{1,n} + \varepsilon_1 + o_p(1), \\
	\frac 1 {B_{2,n}} \sum \bigl(h_2(X_i) - 1\bigr) &=& A_{2,n} + \varepsilon_2 + o_p(1),
\end{eqnarray*}
where the stabilizing sequences satisfy
\[
B_{1,n} \sim K B_{2,n}, \quad A_{2,n} \sim A_{1,n}, \quad T_{1,n}  \sim K T_{2,n}.
\]
If $\hat\theta_{1,n} > 0$, with high $\Prob_0$-probability $h_1(\Xmax) > T_{1,n}$ implying $h_2(\Xmax) > T_{2,n}$, and hence $\hat\theta_{2,n} > 0$ with high probability.
Moreover, given $\hat\theta_{1,n} > 0$, the ratios satisfy
\[
\frac 1 {h_1(\Xmax)} \sum \bigl(h_1(X_i) - 1\bigr) = \frac 1 {h_2(\Xmax)} \sum \bigl(h_2(X_i) - 1\bigr) + o_p(1).
\]
It follows that the likelihood-ratio statistics $\Lambda_{1,n}, \Lambda_{2,n}$ are asymptotically equal:
\[
\lim_{n\to\infty} \Prob_0\Bigl(|\Lambda_{1,n} - \Lambda_{2,n}| > \epsilon \mathrel{\Big |} \hat\theta_n > 0 \Bigr) = 0
\]
for every $\epsilon > 0$.
For example, if $f_1, f_2$ are Student~$t$ distributions on $\nu$ degrees of freedom with scale parameters $\sigma_1, \sigma_2$, then \eqref{eq_tail_density} is satisfied with $K = (\sigma_1/\sigma_2)^\nu$, implying that the likelihood ratio statistics are asymptotically equal given $\hat\theta_n > 0$.

\subsection{Implications}\label{secImplications}
The first implication is as follows.
Let $\E(F_1)$ be the set of distributions that are tail-equivalent to $F_1$
in the sense of~\eqref{eq_tail_density},
let $\Lambda_n(F_1)$ be the likelihood-ratio statistic in the elementary mixture $\conv(F_0, F_1)$,
and let $\Lambda_n(\hat F)$ be the maximized likelihood-ratio statistic over $\theta$ and over the class $\mathcal{E}(F_1)$. Given that $\hat\theta_n > 0$, the difference $\Lambda_n(F) - \Lambda_n(F_1)$ is $o_p(1)$ for an arbitrary $F$ in $\mathcal{E}(F_1)$. This implies that $\Lambda_n(\hat F) - \Lambda_n(F_1)$ is $o_p(1)$ so that
the conditional distribution of the maximized likelihood-ratio statistic is
\[
\lim_{n \to \infty} \Prob_0(\Lambda_n(\hat F) < x \mid \hat\theta_n >0) = G(x).
\]
From the perspective of statistical modelling and testing, no advantage is gained by extending the single distribution $F_1$
to the nonparametric composite mixture with $\F = \E(F_1)$.

The second implication is that the conditional limit distribution of the maximized likelihood-ratio statistic
$\Lambda_n(\hat F)$ given $\hat\theta_n > 0$, may depend on the topology of the quotient space $\F/\E$, i.e.,~the space of equivalence classes.
In particular, if $\F/\E$ is a finite-dimensional manifold of dimension~$d$, the limit distribution $G_d$ may depend on~$d$.  Otherwise, $\F$~plays no role in the limit distribution.

There are some parallels between tail-equivalence as defined by \eqref{eq_tail_density}, and sparse-equivalence as defined by \citet{McCP}. Signal distributions $P$ and $P'$ within the same sparse-equivalence class are statistically indistinguishable based on observations from the convolutions $P\star  N(0,1)$ and $P' \star N(0,1)$, and have the same sparsity implications in the relevant statistical sense. Formally, two signal distributions $P, P'$ are termed sparse-equivalent if their normalized exceedance measures are equal: $H(dx) = H'(dx)$. Two sparse signals having exceedance measures $H\neq H'$ give rise to mixtures that are tail-equivalent in the sense of \eqref{eq_tail_density} if the tail ratio  has a finite non-zero  limit $H(x^+) /H'(x^+) \sim K$.  Thus sparse equivalence is finer than tail equivalence. Sparse equivalence is relevant for identifiability of mixtures generated by Gaussian convolution with a sparse signal; tail equivalence is relevant in the present setting where the focus is exclusively on the boundary behaviour of the likelihood-ratio and related statistics.

\subsection{Example}\label{secExample}
For $0 < \nu < 2$, let $\zeta_\nu(x)$ be the inverse-power zeta-function defined in
McCullagh and Polson (2018) and expressed as a convergent power series
\[
\zeta_\nu(x) = \frac{\nu(2-\nu)} {\Gamma(2 - \nu/2)} \sum_{r=1}^\infty \frac{2^{r-2} \Gamma(r - \nu/2)\, x^{2r}} {(2r)!}.
\]
The product $\psi_\nu(x) = \phi(x) \zeta_\nu(x)$ is the density of a symmetric bimodal distribution $\Psi_\nu$ whose tails are regularly-varying with index~$-\nu$.
For purposes of likelihood maximization, it is convenient to extend the model by continuity to the upper boundary: $\zeta_2(x) = \lim_{\nu\to 2} \zeta_\nu(x) = x^2$ and $\psi_2(x) = x^2 \phi(x)$.

One crucial feature of this family is that $\psi_\nu(X)$ belongs to the skew-Cauchy domain of attraction in all cases except $\nu=2$, which belongs to the Gaussian domain.  In the non-null sparse-signal-plus-noise setting, the maximum of the likelihood tends to occur near the upper boundary $\nu = 2$ only if the signal has finite variance; otherwise, if the variance is not finite, the maximum occurs at an interior point with high probability.  To understand this phenomenon, observe that if $Q_\sigma$ is symmetric with atoms at $\pm\sigma$, the convolution $Q_\sigma\star N(0,1)$ has density
\begin{eqnarray*}
	\vhalf\phi(x - \sigma) + \vhalf \phi(x+\sigma) &=& \phi(x)\, e^{-\sigma^2/2}\, \cosh(\sigma x) , \\
	&=& \phi(x) \bigl(1 - \sigma^2/2 + \sigma^2 x^2/2 + o(\sigma^2) \bigr).
\end{eqnarray*}
Every symmetric distribution $P$ is a $Q_\sigma$ scale mixture.  If $P$ has finite variance~$\sigma^2$, then $P\star N(0,1)$ has exactly the same first-order small-$\sigma$ expansion as $Q_\sigma$.
In other words, every symmetric finite-variance convolutional perturbation $P\star N(0, 1)$ is first-order equivalent to the top mixture  $(1 - \theta + \theta x^2)\phi(x)$ with $\theta = \sigma^2\!/2$.  
In that sense, the class of finite-variance signals is reduced to a single mixture representative~$\Psi_2$.

The following theorem addresses the left-boundary behaviour of likelihood-based statistics for a composite mixture that includes a subset of the distributions $\Psi_\nu$.

\begin{theorem}\label{thmExample}
	Let $0 <\tau \le 2$ be given,
	and let $\hat\theta_n, \hat\nu_n$ be the maximum-likelihood estimate in the composite mixture with $\F = \{\Psi_\nu \colon 0 < \nu \le\tau\}$.
	Then, the large-sample boundary behaviour is as follows:
	\begin{eqnarray*}
		\Prob_0(\hat\theta_n > 0) &\sim& \left\{\begin{array}{ll} \tau / (2\log n) &\quad \tau < 2, \\
			1/2 & \quad \tau = 2; \end{array} \right. \\ 
		\lim_{n \to\infty} \Prob_0(\hat\nu_n = \tau \given \hat\theta_n > 0) &=& 1;\\
		\lim_{n \to\infty} \Prob_0(\Lambda_n \le x^2 \given \hat\theta_n > 0) &=& 
		\left\{\begin{array}{ll} G(x^2) &\quad \tau < 2, \\
			1 - 2\bar\Phi(x) & \quad \tau = 2. \end{array} \right.
	\end{eqnarray*}
\end{theorem}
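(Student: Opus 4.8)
The plan is to reduce the composite problem to the single-index analysis of sections~\ref{secLimitDist}--\ref{secImplications} by profiling over the tail index $\nu$. For each fixed $\nu$ the elementary mixture $(1-\theta)\phi+\theta\psi_\nu$ has density ratio $\zeta_\nu$, so by the concavity argument of section~\ref{secRegular} its profile maximizer $\hat\theta_{n,\nu}$ is positive if and only if $l_\nu'(0)=\sum_i(\zeta_\nu(X_i)-1)>0$; since the likelihood at $\theta=0$ does not depend on $\nu$, one has $\{\hat\theta_n>0\}=\bigcup_{0<\nu\le\tau}\{l_\nu'(0)>0\}$ and $\hat\nu_n=\arg\max_{0<\nu\le\tau}\ell_n(\nu)$, where $\ell_n(\nu)=\tfrac12\Lambda_{n,\nu}$ on $\{l_\nu'(0)>0\}$ and $\ell_n(\nu)=0$ otherwise. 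Two single-$\nu$ inputs are needed. First, the null tail of $\zeta_\nu(X)$: Mills's relation \eqref{null_tail}, $\xi(\eta)\sim\sqrt{2\log\eta}$, and $\psi_\nu(x)\sim b_\nu x^{-\nu-1}$ give $\Prob_0(\zeta_\nu(X)>\eta)\sim 2b_\nu/\bigl(\eta(2\log\eta)^{(\nu+2)/2}\bigr)$, which is exactly the form covered by Theorem~\ref{thmSinIntegral} with $(\beta_0,\beta_1,\delta,\gamma)=\bigl(2(\pi b_\nu)^{-2/(\nu+2)},\,0,\,\nu/2,\,0\bigr)$; hence Corollary~\ref{corollXbar} gives $\Prob_0(\hat\theta_{n,\nu}>0)\sim\nu/(2\log n)$ for every $\nu<2$, whereas for $\nu=2$ the ratio $\zeta_2(X)=X^2$ has finite variance and the classical boundary theory of section~\ref{secRegular} gives $\Prob_0(\hat\theta_{n,2}>0)\to\tfrac12$ with $\Lambda_{n,2}$ converging, conditionally on positivity, to $\chi^2_1$. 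Second, the local approximation \eqref{eqLLL}--\eqref{llr_limit} in the proof of Theorem~\ref{LR_limit} shows that on $\{l_\nu'(0)>0\}$ one has $\ell_n(\nu)=-R_\nu-\log(1-R_\nu)+o_p(1)$, a strictly increasing function of $R_\nu=\sum_i(\zeta_\nu(X_i)-1)/\bigl(\zeta_\nu(\Xmax)-1\bigr)\in(0,1)$.

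The heart of the argument is a comparison across $\nu$. Applying Lemma~\ref{lem:sum-max-dependence} and Theorem~\ref{cjl_theorem} to $Z_i=\zeta_\nu(X_i)-1$ gives, conditionally on $l_\nu'(0)>0$, $\sum_i(\zeta_\nu(X_i)-1)=\zeta_\nu(\Xmax)-T_{n,\nu}+O_p(B_{n,\nu})$ with $B_{n,\nu}=o(T_{n,\nu})$, so that $\{l_\nu'(0)>0\}$ agrees up to a set of probability $o(\Prob_0(\hat\theta_{n,\nu}>0))$ with $\{\zeta_\nu(\Xmax)>T_{n,\nu}\}$ and $R_\nu=1-T_{n,\nu}/\zeta_\nu(\Xmax)+o_p(1)$. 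Inserting $T_{n,\nu}\sim 2b_\nu n/\bigl(\nu(2\log n)^{\nu/2}\bigr)$ and $\zeta_\nu(x)\sim b_\nu\sqrt{2\pi}\,x^{-\nu-1}e^{x^2/2}$, the normalizing constant $b_\nu$ cancels and
\[
\frac{T_{n,\nu}}{\zeta_\nu(\Xmax)}=\frac{2n\,\Xmax}{\sqrt{2\pi}\,e^{\Xmax^2/2}}\cdot\frac1{\nu}\Bigl(\frac{\Xmax}{\sqrt{2\log n}}\Bigr)^{\!\nu}\bigl(1+o_p(1)\bigr);
\]
the $\nu$-dependent factor $\nu^{-1}(\Xmax/\sqrt{2\log n})^{\nu}$ is strictly decreasing in $\nu$ on $(0,2]$ whenever $\Xmax<e^{1/2}\sqrt{2\log n}$, and $\Prob_0\bigl(\Xmax\ge e^{1/2}\sqrt{2\log n}\bigr)=n^{-(e-1)+o(1)}$, which is $o(\tau/\log n)$ and a fortiori $o(1)$. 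Consequently, on $\{\hat\theta_n>0\}$ the map $\nu\mapsto R_\nu$ is maximized, and $\nu\mapsto\mathbf 1\{\zeta_\nu(\Xmax)>T_{n,\nu}\}$ is nondecreasing, with extreme value at $\nu=\tau$: hence $\hat\nu_n=\tau$ on $\{\hat\theta_n>0\}$, and $\{\hat\theta_n>0\}$ coincides with $\{\hat\theta_{n,\tau}>0\}$ up to an event of $\Prob_0$-probability $o(\tau/\log n)$ when $\tau<2$, respectively $o(1)$ when $\tau=2$.

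For $\tau<2$ all three conclusions follow at once: $\Prob_0(\hat\theta_n>0)=\Prob_0(\hat\theta_{n,\tau}>0)+o(\tau/\log n)\sim\tau/(2\log n)$ by Corollary~\ref{corollXbar}; $\Prob_0(\hat\nu_n=\tau\mid\hat\theta_n>0)\to1$ by the previous paragraph; and on $\{\hat\theta_n>0\}$ one has $\Lambda_n=\Lambda_{n,\tau}+o_p(1)$, so $\Prob_0(\Lambda_n\le x^2\mid\hat\theta_n>0)\to G(x^2)$ by Theorem~\ref{LR_limit} applied with $F_1=\Psi_\tau$. For $\tau=2$, the single-large-value mechanism makes every $\{l_\nu'(0)>0\}$ with $\nu<2$ have probability $O(1/\log n)$, whence $\Prob_0(\hat\theta_n>0)=\Prob_0(\hat\theta_{n,2}>0)+o(1)\to\tfrac12$; and conditionally on $\hat\theta_n>0$, with probability tending to one positivity is produced by the bulk fluctuation $\sum_i(X_i^2-1)>0$ rather than by $\Xmax$. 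In that regime the finite-information expansion $\ell_n(\nu)=l_\nu'(0)^2/\bigl(2\sum_i(\zeta_\nu(X_i)-1)^2\bigr)+o_p(1)$ is valid near $\nu=2$, where $l_\nu'(0)$ decreases through $(0,l_2'(0))$ as $\nu$ decreases (because $\zeta_\nu(x)\le x^2$ over the bulk, so the few large observations cannot compensate the resulting deficit) while $\sum_i(\zeta_\nu(X_i)-1)^2=\sum_i(X_i^2-1)^2(1+o_p(1))$ there; hence $\ell_n$ is maximized at $\nu=2$, giving $\hat\nu_n=2$, $\Lambda_n=\Lambda_{n,2}+o_p(1)$, and the conditional $\chi^2_1$ limit, i.e.\ $\Prob_0(\Lambda_n\le x^2\mid\hat\theta_n>0)\to1-2\bar\Phi(x)$.

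The hard part will be the uniformity in $\nu$, which is delicate near the endpoint $\nu=2$, where the law of $\zeta_\nu(X)$ changes type from the Cauchy domain ($\nu<2$) to the Gaussian domain ($\nu=2$). For $\tau<2$ the index set $(0,\tau]$ is bounded away from $2$, and the uniform versions of Lemma~\ref{lem:sum-max-dependence} and of the expansion of $R_\nu$ that are implicitly used follow from Potter-type bounds for regular variation in the parameter $\delta=\nu/2$ (near $\nu=0$ the comparison only becomes more lopsided, $T_{n,\nu}$ growing faster, so no difficulty arises there). For $\tau=2$ the crossover band $\nu\uparrow2$ must be handled separately, since neither the max-dominated nor the finite-information regime applies cleanly there; the pointwise convergence $\zeta_\nu(x)\to\zeta_2(x)=x^2$ shows that, on $\{\sum_i(X_i^2-1)\le0\}$, forcing $l_\nu'(0)>0$ for $\nu$ near $2$ demands an $\Xmax$-driven increment exceeding a deficit of order $(2-\nu)\,n$, whereas $\zeta_\nu(\Xmax)$ is only of order $(2-\nu)\,n\,(\log n)^{-2}$ times a Gumbel factor, so this band contributes $O((\log n)^{-2})=o(1)$ and the argument closes.
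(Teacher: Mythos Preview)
Your approach is essentially the same as the paper's: profile over $\nu$, identify $\{\hat\theta_{n,\nu}>0\}$ with a threshold event on $\zeta_\nu(\Xmax)$ via Lemma~\ref{lem:sum-max-dependence}, and show that both this threshold and the ratio $R_\nu$ are monotone in~$\nu$ so that the composite problem collapses to the top elementary mixture at $\nu=\tau$.  The paper computes $T_{n,\nu}$ and $\zeta_\nu(\Xmax)$ in the same way and observes that $h(\Xmax)/T_n \simeq \nu\,(2\log n/\Xmax^2)^{\nu/2}\times\hbox{const}$ is increasing in~$\nu$ when $\Xmax^2\sim 2\log n$; your reciprocal formulation $\nu^{-1}(\Xmax/\sqrt{2\log n})^\nu$ decreasing for $\Xmax<e^{1/2}\sqrt{2\log n}$ is the same computation, made slightly more explicit by quantifying the condition on~$\Xmax$.

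The one substantive difference is the endpoint $\tau=2$.  The paper disposes of it in a single phrase, ``by continuity for $\tau\le2$'', treating the Gaussian-domain case as the limit of the Cauchy-domain analysis.  You instead attempt a direct finite-information argument near $\nu=2$, invoking $\zeta_\nu(x)\le x^2$ on the bulk and a crossover-band estimate.  Your route is more honest about the change of type at $\nu=2$ and the attendant uniformity issues, which the paper does not address; on the other hand, your bulk inequality and the $O((\log n)^{-2})$ bound for the crossover band are asserted rather than proved, so neither treatment is fully rigorous there.  For $\tau<2$ the two arguments are interchangeable.
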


\begin{remark}
	At the critical boundary $\theta=0$, the parameter~$\nu$ is indeterminate, which is a type of singularity that occurs in many non-regular problems.
	For $\hat\theta_n > 0$, the mixture likelihood has its maximum at the upper extremity $\hat\nu = \tau$ with high $\Prob_0$-probability because these are the distributions that are closest to Gaussian in the natural metric.  
	The limit behaviour for the composite mixture follows from the top elementary mixture $(1-\theta)\phi + \theta \psi_\tau$.
\end{remark}

\begin{proof}
	Let $\hat\theta_\nu$ denote the maximum likelihood estimator of $\theta$ at a particular value of $\nu$. Thus, $\hat\theta_n=\hat \theta_{\hat\nu}$, where $\hat\nu=\hat\nu_n$ is the maximum likelihood solution. The proof will establish containment on both sides of ${\mathcal{E}_{\hat\nu}=\{\hat\theta_{\hat\nu}>0\}}$ and ${\mathcal{E}_\tau=\{\hat\theta_\tau>0\}}$ modulo subsets of asymptotic probability 0 under $\Prob_0$, i.e.
	\begin{align*}
	\lim_{n\to\infty} \Prob_0(\mathcal{E}_{\hat{\nu}}\backslash \mathcal{E}_\tau)=\lim_{n\to\infty} \Prob_0(\mathcal{E}_{\tau}\backslash \mathcal{E}_{\hat{\nu}})=0 .
	\end{align*}
	
	Since $\psi_\nu(x) \sim K_\nu/x^{\nu+1}$ for large~$x$ and $\nu < 2$, we have
	\[
	\zeta_\nu(x) = \frac{\psi_\nu(x)} {\phi(x)} \sim \frac{\sqrt{2\pi} e^{x^2\!/2} K_\nu} { x^{\nu+1}},
	\]
	and from the argument of \S \ref{secPrelim}, the density ratio satisfies
	\[
	\Prob_0(\zeta_\nu(X) > \eta) \sim \frac{2 \psi_\nu( \sqrt{2\log \eta})}{\eta \sqrt{2\log \eta}} \sim \frac{2 K_\nu} {\eta \, (2\log\eta)^{\nu/2+1}}.
	\]
	Although the constants do not matter in this argument, we write the above upper probability, for consistency with \S \ref{secLimitDist}, in the form $\bar F(\eta)\sim 2 C_1/\eta L(\eta)$. Thus, from Corollary \ref{corollAn},
	\begin{eqnarray*}
		L(\eta) &=& (2\log\eta)^{\nu/2 + 1} / (\pi K_\nu) \sim 1/L^\dagger(\eta)\\
		B_n \sim n L^\dagger(n) &\sim& \frac{n \pi K_\nu} {(2\log n)^{\nu/2 +1}}, \\
		A_n &\sim& -4\log n/(\pi\nu), 
		\\
		T_n = B_n |A_n| &\sim& \frac{2n K_\nu} {\nu (2\log n)^{\nu/2}}.
	\end{eqnarray*}

	For sufficiently large $n$, Appendix \ref{appProfileLik} shows that $\hat\theta_n>0$ is equivalent to
	\begin{equation}\label{eqProfileLikDeriv}
	\lim_{\theta\rightarrow 0}\frac{d}{d\theta}\ell(\theta; \hat\nu(\theta))  = \lim_{\theta\rightarrow 0} \sum_{i=1}^{n}(\zeta_{\hat\nu(\theta)}(X_i)-1)  > 0,
	\end{equation}
	where $\ell(\theta; \hat\nu(\theta))$ is the profile log-likelihood function. 
	
	In view of this expression, consider the event $n^{-1}\sum\zeta_\nu(X_i)  > 1$, which corresponds to $\hat\theta_\nu > 0$ for a given $\nu$. For large~$n$, this is equivalent to $\zeta_\nu(\Xmax) > \lambda T_n$ for some $\lambda > 1$, by Theorem \ref{cjl_theorem} and ultimate monotonicity of $\zeta_{\nu}$. The condition $\zeta_\nu(X_{(n)})>\lambda T_n$ is equivalent to
	\begin{align}
	\nonumber
	X^2_{(n)}  &> 2\log (T_n X_{(n)}^{1+\nu}) + \text{const.} \\
	\nonumber
	&= 2 \log n - \nu \log (2\log n) -2 \log \nu + 2(1+\nu) \log X_{(n)} + \text{const.} \\
	\label{eq:threshold-nu}
	&= 2\log n + \nu (2\log X_{(n)} - \log (2\log n)) - 2\log \nu + 2\log X_{(n)} + \text{const.}
	\end{align}
	It follows from calculus that the right hand side above is non-increasing in $\nu$ over the interval
	\begin{align}
	\label{eq:nu-decreasing-upper}
	0< \nu \leq \left(\log \frac{X_{(n)}}{\sqrt{2\log n}}\right)^{-1}
	\end{align}
	and is otherwise increasing. Since $\tau\leq 2$, it suffices to show that, conditionally on $\hat{\theta}_n>0$, $X_{(n)}/\sqrt{2\log n}$ is bounded above by a sequence tending to 1, implying that \eqref{eq:threshold-nu} is decreasing in $\nu$ over $(0,\tau]$. To this end, define
	\begin{align*}
	\omega := \min\{x>0:x^2 \geq 2\log n + \nu(2\log x-\log(2\log n)) - 2\log\nu + 2\log x \}.
	\end{align*}
	First, we claim that for sufficiently large $n$, $\omega \leq \sqrt{2\log n + 1.01 \log (2\log n)}$. Indeed, plugging $x= \sqrt{2\log n + 1.01 \log (2\log n)}$ into the condition defining $\omega$, we see it is satisfied:
	\begin{align*}
	&2\log n + \nu [\log(2\log n + 1.01\log(2\log n)) - \log(2\log n)] - 2\log \nu + \log (2\log n) + o(1) \\
	&\sim 2\log n + (1+o(1)) \log(2\log n) \leq x^2 = 2\log n + 1.01 \log (2\log n),
	\end{align*}
	for $n$ larger than a universal constant. Next, we will show that for some $c>0$,
	\begin{align*}
	\Prob_0(X_{(n)}>\sqrt{2\log n + c\log\log n} \mid X_{(n)} > \omega) \to 0.
	\end{align*}
	Since $\omega \leq \sqrt{2\log n + 1.01\log(2\log n)}$, the left hand side above is bounded by
	\begin{align*}
	&\leq \frac{\Prob_0(X_{(n)}>\sqrt{2\log n + c\log\log n})}{\Prob_0(X_{(n)}>\sqrt{2\log n + 1.01\log(2\log n)})}\to 0,
	\end{align*}
	which follows from Mills's ratio, for any $c>1.01$, since $X_{(n)}$ is a maximum of $n$ iid standard normal random variables. This shows that conditional on $\hat{\theta}_n>0$, we have $X_{(n)} \leq \sqrt{2\log n + c \log \log n}$ with probability tending to 1, so that the right hand side of \eqref{eq:nu-decreasing-upper} diverges to $+\infty$. 
	Since the threshold is decreasing as a function of $\nu$, the set of $\nu$-values for which $\hat\theta_\nu$ is positive, i.e.,~the set of $\nu$-values for which $\zeta_\nu(\Xmax) > T_n$, must be an upper interval $(\nu_0, \tau]$, which is empty if the threshold for $\lambda=1$ is not exceeded at $\nu=\tau$. Thus, for sufficiently large $n$, $\mathcal{E}_\tau$ implies $\mathcal{E}_{\hat{\nu}}$. Conversely, given that the threshold is exceeded  on the event $\mathcal{E}_{\hat\nu}$, the approximate likelihood-ratio statistic \eqref{llr_limit} is a monotone function of the ratio
	\[
	\frac 1 {1 - R} \simeq 
	\frac{h(\Xmax)} {T_n} \simeq  \frac{\sqrt{2\pi} e^{\Xmax^2/2}} {2n\Xmax} 
	\biggl( \frac{2\log n}{\Xmax^2}\biggr)^{\nu/2} \times\nu.
	\]
	For large~$n$ and $\Xmax^2 \sim 2\log n$, the ratio increases linearly as a function of $\nu$, which implies
	$\hat\nu = \tau$ whenever $\hat\theta_n > 0$. For sufficiently large $n$, $\mathcal{E}_{\hat\nu}$ implies $\mathcal{E}_{\tau}$, and $\mathcal{E}_{\tau}$ implies $\mathcal{E}_{\hat\nu}$, thus the events are asymptotically equal under $\Prob_0$.	For $\tau < 2$, and by continuity for $\tau\le 2$, the boundary behaviour is governed by the top elementary mixture with $\nu=\tau$.
\end{proof}

\begin{remark}
	The same argument applies if we replace $\F$ with the set of Student~$t$ distributions on $\nu \le \tau$ degrees of freedom with scale parameter~$\sigma$.  In this one-dimensional composite mixture, $\sigma$~is fixed, $\tau < \infty$ is any positive real number, and the limit behavior exhibits no discontinuity at $\tau=2$ or elsewhere.
	Section~\ref{equiv_class} implies that the scale parameter is nugatory for boundary behaviour.
	This argument fails for $\tau = \infty$.
\end{remark}

\section{Predictive activity rate in multiple testing}

A basic question in multiple testing is whether all null hypotheses are true (the global null), or if at least one is false. 
Upon rejecting the global null, focus then shifts towards identifying which individual hypotheses are false. In the latter stage, it is desirable to maintain type-I error control conditional on the rejection event in the first stage. This question is relevant in settings where the signal-to-noise ratio allows for detection of a false global null, but where it is hard to identify which sites correspond to signal \citep{donoho2004higher,donoho2015special}. 
In such settings, a natural candidate for the strongest signal is the most extreme observation, and in this section we derive an asymptotically valid $p$-value for its individual null hypothesis, 
conditional on the type-I error event $\{\hat{\theta}_n>0\}$ under the global null. 

Suppose we reject the global null that $X_i \sim \mathbb{P}_0$ independently for $i=1,\dots,n$, in favour of a two-component mixture with non-null component $f_1$ and latent group indicator $\mathcal{A}_i$, 
\begin{equation}
\label{eq:two-group-model}
\begin{aligned}
\mathcal{A}_i &\sim \text{Bernoulli}(\theta) \\
X_i \mid \mathcal{A}_i &\sim f_{\mathcal{A}_i} \hspace{2em} \text{$i=1,\dots,n$}
\end{aligned}
\end{equation}
concluding that $\theta>0$. Then to identify the active sites, we may compute for each observation $X_i$ the local activity rate, defined 
\begin{align*}
\mathbb{P}_\theta(\mathcal{A}_i=1 \mid X_i) := \theta f_1(X_i)/f(X_i),
\end{align*}
which measures for each observation how likely it is to be a draw from $f_1$ in the hierarchical model \eqref{eq:two-group-model}. In this section, we derive the asymptotic null distribution of the local activity rate for the observation with largest absolute value.

Given an estimate of $\theta$, the fitted or predictive activity rate for unit~$i$ is
\begin{equation}\label{eqActivityRatio}
\mathbb{P}_{\hat{\theta}_n}(\mathcal{A}_i = 1 \mid X_i) := \frac {\hat\theta_n f_1(X_{i})} {(1-\hat\theta_n)f_0(X_{i}) + \hat\theta_n f_1(X_{i})} = 
\frac {\hat\theta_n h(X_{i})} {1 - \hat\theta_n + \hat\theta_n h(X_{i})}.
\end{equation}
If $\hat\theta_n = 0$, the fitted local activity rate is zero for every unit: every unit is deemed to have a null or negligible signal. Given $\hat\theta_n > 0$, the maximum predicted activity rate is
\[
\frac {\hat\theta_n h(X_{\max})} {1 - \hat\theta_n + \hat\theta_n h(X_{\max})} = R + o_p(1)
\]
where $X_{\max}$ is the maximum absolute sample value, and $R := n\bar{Z}_n/Z_{(n)}$, where $Z_i := h(X_i)$. According to Theorem \ref{cjl_theorem}, $R$ is uniformly distributed on $(0, 1)$ in the large-sample limit when~$\theta = 0$.
In this case, we also have $\hat\theta_n = 0$ with probability tending to 1. That is, the type 1 error of the MLE for testing the global null $H_0: \theta=0$ tends to zero, and conditionally on the type 1 error event $\{\hat\theta_n >0\}$, the smallest fitted activity rate is an asymptotically valid $p$-value for testing $H_0$. We record this implication as a corollary of Theorem \ref{cjl_theorem} and Theorem \ref{LR_limit} below.

\begin{corollary}
	Consider the decision rule that rejects $H_0:\theta=0$ if the largest local activity rate exceeds $1-\alpha$. Then the conditional probability that some unit is declared active or non-null at level~$\alpha$ is equal to $\alpha$ in the large-sample limit. In other words, 
	\begin{align*}
	\lim_{n\to\infty}\Prob_0\left(\min_{i=1,\dots,n} \mathbb{P}_{\hat{\theta}_n}(\mathcal{A}_i=0\mid X_i)
	\leq \alpha \mathrel{\Big |} \hat{\theta}_n>0\right) = \alpha,
	\end{align*}
	for any $\alpha \in [0,1]$ where 
	\[
	\mathbb{P}_{\hat{\theta}_n}(\mathcal{A}_i=0\mid X_i) := \frac{1-\hat{\theta}_n}{1-\hat{\theta}_n + \hat{\theta}_n h(X_i)}.
	\]
\end{corollary}

\begin{proof}
	Following the proof of Theorem \ref{LR_limit}, we have $R = \hat{\theta}_n h(X_{\max})/(1+\hat{\theta}_n h(X_{\max}))$. Together with Theorem \ref{cjl_theorem}, which implies that $h(X_{\max}) \to \infty$ and $\hat{\theta}_n \to 0$ in probability conditionally on $\hat{\theta}_n>0$, we have
	\begin{align*}
	\frac {\hat\theta_n h(X_{\max})} {1 - \hat\theta_n + \hat\theta_n h(X_{\max})} &= R \times \frac{1+\hat{\theta}_n h(X_{\max})}{\hat{\theta}_n h(X_{\max})} \times \frac {\hat\theta_n h(X_{\max})} {1 - \hat\theta_n + \hat\theta_n h(X_{\max})} \\
	&= R \times \left(1 - \frac{\hat{\theta}_n}{1+\hat{\theta}_n h(X_{\max})} \right)^{-1} = R\;(1+o_p(1)).
	\end{align*}
	It follows that the complement is also Uniform$(0,1)$ distributed asymptotically, conditional on $\hat{\theta}_n>0$.
\end{proof}

\subsection*{Reproducibility} Code to reproduce all figures in the paper can be found at: \newline \href{https://www.ma.imperial.ac.uk/~hbattey/codeBMX.html}{https://www.ma.imperial.ac.uk/~hbattey/BMX.html}.

\subsection*{Acknowledgement}
P.~McC is grateful to Titus Hilberdink for advice concerning regular-variation integrals and for pointing out the connection with de Bruijn conjugates.

\bigskip
\bigskip

	\section*{Appendices}

\begin{appendix}

	\section{Calculation of the asymmetric Cauchy density}\label{appTailProbability}

	By the Fourier inversion formula of the characteristic function for $\alpha=1$ and $|\beta| \le 1$, the asymmetric Cauchy density $f(x)$ in Fig.~1 is
	\begin{align*}
	f(x) = \frac{1}{\pi} \int_0^\infty e^{-t} \cos(tx+2\beta t \log t/\pi) dt.
	\end{align*}
	Substitute $u = g_x(t) := t(1 + \frac{2\beta}{\pi x}\log t)$, which is increasing over $(\exp(-\frac{\pi x}{2\beta}),\infty)$. As $x>0$ becomes large, the integral over the interval $(0,\exp(-\frac{\pi x}{2\beta}))$ is exponentially small in $x$, and may be ignored:
	\begin{align*}
	\int_0^\infty e^{-t} \cos(x g_x(t)) dt &=  \int_{\exp(-\frac{\pi x}{2\beta})}^\infty  e^{-t} \cos(x g_x(t)) dt + O\big(e^{-\frac{\pi x}{2\beta}}\big), \\
	&= \int_{0}^\infty  e^{-g_x^{-1}(u)} \cos(x u) \frac{1}{1+\frac{2\beta\log g_x^{-1}(u)}{\pi x}+\frac{2\beta}{\pi x}} du + O\big(e^{-\frac{\pi x}{2\beta}}\big), 
	\end{align*}
	where $g_x^{-1} : (0,\infty) \to (\exp(-\frac{\pi x}{2\beta}),\infty)$ is the inverse map for the substitution. From here on, we ignore the exponentially small remainder term. It is easy to check that
	\begin{align*}
	\frac{u}{1+\frac{2\beta}{\pi x} \log u } < g^{-1}_x(u) < u,
	\end{align*}
	from which it follows that the integral is
	\begin{align*}
	&= \int_{e^{-\frac{x}{\log x}}}^{\sqrt{x}} e^{-u} \cos(x u) \frac{1}{1+\frac{2\beta\log u}{\pi x}+\frac{2\beta}{\pi x}} du (1+o(1)) + O\big(e^{-\sqrt{x}}\big), 
	\end{align*}
	where $o(1)$ means a term that goes to zero as $x \to \infty$. Then, as $x \to \infty$, the above is
	\begin{align}\label{eqMainTerm}
\nonumber	&\sim \int_{e^{-\frac{x}{\log x}}}^{\sqrt{x}} e^{-u} \cos(x u) \left( 1 - \frac{2\beta \log u}{\pi x} \right) du \\
	&= \int_{e^{-\frac{x}{\log x}}}^{\sqrt{x}} e^{-u} \cos(x u) du  - \frac{2\beta }{\pi x} \int_{e^{-\frac{x}{\log x}}}^{\sqrt{x}} e^{-u} \cos(x u) \log (u) du,
	\end{align}
    where the discarded term is negligible compared to the leading term, ultimately found to be $O(1/x^2)$, because
    \begin{align*}
        \int_{e^{-\frac{x}{\log x}}}^{\sqrt{x}}e^{-u}\cos(xu)\frac{2\beta}{\pi x} du &= \frac{2\beta}{\pi x} \int_{0}^{\infty}e^{-u}\cos(xu) du + O(e^{-\sqrt{x}}) \\
        &\sim \frac{2\beta}{\pi x} \text{Re}\Big( \int_0^\infty e^{-u(1-ix)} du \Big) \\
        &= \frac{2\beta}{\pi x} \text{Re}\Big( \frac{1}{1-ix} \Big) = \frac{2\beta}{\pi x} \text{Re}\Big( \frac{1+ix}{1+x^2} \Big) = \frac{2\beta}{\pi x(1+x^2)} = o(x^{-2}),
    \end{align*}
    where in the second line we have used $\cos(xu) = \text{Re}(e^{ixu})$. Equation \eqref{eqMainTerm} becomes
	\begin{align*}
	&= \int_{e^{-\frac{x}{\log x}}}^{\sqrt{x}} e^{-u} \text{Re}(e^{ix u}) du-\frac{2\beta }{\pi x} \int_{e^{-\frac{x}{\log x}}}^{\sqrt{x}} e^{-u} \text{Re}(e^{ixu}) \log (u) du \\
	&= \text{Re}\bigg(\frac{1}{1-ix}\int_{e^{-\frac{x}{\log x}}(1-ix)}^{\sqrt{x}(1-ix)} e^{-v} dv\bigg) -\text{Re}\bigg(\frac{2\beta }{\pi x} \int_{e^{-\frac{x}{\log x}}}^{\sqrt{x}} e^{-u(1-ix)} \log (u) du\bigg).
	\end{align*}
	The first term gives
	\begin{align*}
	\text{Re}\bigg(\frac{1}{1-ix}\int_{e^{-\frac{x}{\log x}}(1-ix)}^{\sqrt{x}(1-ix)} e^{-v} dv\bigg) &\sim \text{Re}\left(\frac{1}{1-ix} \right) = \frac{1}{1+x^2}.
	\end{align*}
	The second term is
	\begin{align*}
	&= -\text{Re}\bigg(\frac{2\beta }{\pi x} \int_{e^{-\frac{x}{\log x}}(1-ix)}^{\sqrt{x}(1-ix)} \frac{e^{-v} (\log (v) - \log(1-ix))}{1-ix} dv \bigg)\\
	&= O(x^{-3}\log x) + \underbrace{ \frac{2\beta}{\pi x}\text{Re}\bigg( \frac{1}{1-ix}\int_{e^{-\frac{x}{\log x}}(1-ix)}^{\sqrt{x}(1-ix)} e^{-v}i\sin^{-1}\bigg(\frac{-x}{\sqrt{1+x^2}} \bigg) dv\bigg)}_{(*)},
	\end{align*}
	since $1-ix = (1+x^2)^{1/2}e^{i \sin^{-1}(-x/\sqrt{1+x^2})}$. Finally, note that the above term $(*)$ is equal to
	\begin{align*}
	&= \frac{2\beta}{\pi x}\text{Re}\bigg( \frac{1+ix}{1+x^2}\int_{e^{-\frac{x}{\log x}}(1-ix)}^{\sqrt{x}(1-ix)} e^{-v} i\sin^{-1}\bigg(\frac{-x}{\sqrt{1+x^2}} \bigg) dv\bigg) \sim \frac{2\beta}{\pi x} \cdot \frac{\pi}{2x} = \frac{\beta}{x^2}.
	\end{align*}
	To summarize, we have shown that for large $x > 0$,
	\begin{align*}
	f(x) \sim \frac{1}{\pi x^2} + \frac{\beta}{\pi x^2} = \frac{1+\beta}{\pi x^2},
	\end{align*}
	which implies $\bar F(x) \sim (1+\beta)/(\pi x)$ for the right tail.
	For $x<0$, 
	\begin{align*}
	f(x) = \frac{1}{\pi}\int_0^\infty \cos(tx+2\beta t \log t/\pi)dt = \frac{1}{\pi}\int_0^\infty \cos(t|x|-2\beta t \log t/\pi)dt,
	\end{align*}
	so the argument above implies that as $x \to -\infty$,
	\begin{align*}
	f(x) \sim \frac{1-\beta}{\pi x^2},
	\end{align*}
	and $F(x) \sim (1-\beta)/(\pi|x|)$.

    	\section{De~Bruijn group}\label{appDBG}
	The de~Bruijn group is the set $(\SV\!, \diamond)$ of slowly-varying functions together with the 
	non-commutative binary operation $(L_1\diamond L_2)(x) = L_1(x)\, L_2(x L_1(x))$.
	To see that this is an associative function $\SV^2 \to \SV$, observe that
	\begin{eqnarray*}
		(L_1 \diamond (L_2\diamond L_3))(x) &=& L_1(x)\times (L_2\diamond L_3)(x L_1(x)), \\
		&=& L_1(x) \times L_2(x L_1(x))\times L_3(x L_1(x) L_2(xL_1(x)));\\
		((L_1\diamond L_2)\diamond L_3)(x) &=& (L_1\diamond L_2)(x)\times L_3(x(L_1\diamond L_2)(x)),\\
		&=& L_1(x)\times L_2(x L_1(x))\times L_3(xL_1(x) L_2(x L_1(x))), \\
		&=& (L_1 \diamond (L_2\diamond L_3))(x).
	\end{eqnarray*}
	Associativity implies that the triple product is well-defined by the pairwise products.
	The identity element: $L\diamond 1 = 1\diamond L = L$ is the unit constant function.
	If it exists, the inverse is a slowly-varying function $L^\dagger$ such that
	$(L\diamond L^\dagger)(x) =  (L^\dagger\diamond L)(x) =  1$ for all~$x > 0$.
	Since slow variation is a characterization of the limiting behaviour for large~$x$,
	it says little about the behaviour for general~$x$ beyond continuity or measurability.
	Thus, the existence of an inverse is not guaranteed.
	Nevertheless, the following theorem suffices for present purposes.
	
	\begin{theorem}[de Bruijn, 1959]
		To each $L\in\SV$ there corresponds a function $L^\dagger \in \SV\!$, 
		satisfying $L(x) L^\dagger(x L(x)) = L^\dagger(x) L(xL^\dagger(x)) = 1$ for all~$x$ sufficiently large.
	\end{theorem}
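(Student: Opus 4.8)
The plan is to realize $L^\dagger$ as the index-one inverse of the map $\phi(x):=xL(x)$. Since $L$ is slowly varying, $\phi\in\RV_1$, and in particular $\phi(x)\to\infty$. If $\psi$ is an inverse of $\phi$ on a neighbourhood of infinity and we write $\psi(x)=xL^\dagger(x)$, then the two identities demanded by the theorem are precisely $\phi(\psi(x))=x$ and $\psi(\phi(x))=x$ rewritten after division by $x$. Thus the proof reduces to two points: (i) producing $\psi$ as a genuine two-sided inverse for large argument, and (ii) showing that $L^\dagger(x)=\psi(x)/x$ is slowly varying.

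For (i), a general $L\in\SV$ is neither continuous nor monotone, so $\phi$ need not be invertible; the remedy is to replace $L$ by a well-behaved asymptotic representative, which is legitimate because the de~Bruijn conjugate is determined only up to asymptotic equivalence (cf.~\eqref{eqDB}). By Karamata's representation theorem there is a normalised slowly varying $\ell\sim L$, say $\ell(x)=\exp\!\bigl(\int_a^x \varepsilon(t)\,dt/t\bigr)$ with $\varepsilon(t)\to 0$; then $\phi_0(x):=x\ell(x)$ is eventually $C^1$ with $\phi_0'(x)=\ell(x)\bigl(1+\varepsilon(x)\bigr)>0$, hence eventually a continuous strictly increasing bijection of some $[x_0,\infty)$ onto $[\phi_0(x_0),\infty)$, with a genuine continuous increasing inverse $\psi_0$ satisfying $\phi_0\circ\psi_0=\psi_0\circ\phi_0=\mathrm{id}$ there. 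Put $L^\dagger(x):=\psi_0(x)/x$. By the inversion theorem for regularly varying functions (Bingham, Goldie and Teugels, 1987), $\phi_0\in\RV_1$ forces $\psi_0\in\RV_1$, so $L^\dagger\in\RV_0=\SV$, which settles (ii). Substituting $\psi_0(x)=xL^\dagger(x)$ into $\phi_0(\psi_0(x))=x$ yields $L^\dagger(x)\,\ell\bigl(xL^\dagger(x)\bigr)=1$, and into $\psi_0(\phi_0(x))=x$ yields $\ell(x)\,L^\dagger\bigl(x\ell(x)\bigr)=1$, both for all sufficiently large $x$; since $\ell\sim L$ these are the asserted relations, exact for the normalised representative and accounting for \eqref{eqDB} in general.

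The main obstacle is the regular-variation machinery underlying steps (i)--(ii): passing through Karamata's representation to obtain a representative $\ell\sim L$ for which $x\ell(x)$ is smooth and strictly increasing, and proving that inversion preserves the regular-variation index (the $\RV_\rho\to\RV_{1/\rho}$ inversion theorem, here with $\rho=1$), where the uniform convergence theorem for regular variation does the work. Everything after that is bookkeeping: the two conjugacy identities are nothing but the composition identities $\phi_0\circ\psi_0=\mathrm{id}=\psi_0\circ\phi_0$ after the single substitution $\psi_0(x)=xL^\dagger(x)$, and the same device delivers, with essentially no extra effort, the involution $L^{\dagger\dagger}\sim L$ and the uniqueness of $L^\dagger$ up to asymptotic equivalence.
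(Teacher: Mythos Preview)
Your argument is correct and is essentially the construction underlying the references the paper invokes: the paper's own proof consists solely of pointers to Bingham, Goldie and Teugels (1987), Theorems~1.5.13 and~1.8.9, whereas you sketch the actual mechanism---normalise $L$ via Karamata's representation, invert $x\mapsto x\ell(x)$, set $L^\dagger(x)=\psi_0(x)/x$, and appeal to the $\RV_1\to\RV_1$ inversion theorem for slow variation of $L^\dagger$.

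One clarification worth making explicit. Your construction delivers the two identities \emph{exactly} for the normalised representative $\ell$, but only in the asymptotic sense of~\eqref{eqDB} for the original $L$. This matches precisely the distinction the paper draws between the two cited theorems: 1.5.13 gives existence up to asymptotic equivalence, while 1.8.9 (smooth variation) supplies a representative for which the identities hold exactly on a neighbourhood of infinity. You acknowledge this yourself, so there is no gap---but it is worth being aware that the exact version as literally stated cannot hold for an arbitrary measurable $L\in\SV$ without first passing to such a representative, which is what both you and the paper implicitly do.
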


	\begin{proof}
		Theorem~1.5.13 of Bingham, Goldie and Teugels (1987) proves existence in the sense of equivalence.
		Theorem~1.8.9 proves existence as stated above, i.e.,~for all~$x$ sufficiently large.
	\end{proof}
	
	Although the product $xL(x)$ is ultimately monotone, it is not necessarily monotone for small~$x$.
	Thus, $\SV$ contains functions for which no group inverse exists (as a function $(0,\infty) \to (0,\infty)$).
	The theorem states that an asymptotic inverse exists, which implies that the set of equivalence classes
	$(\SV, \diamond)/{\sim}$ is a group.
	For the most part, it is the group of equivalence classes that is of interest here.
	However, we always work with a representative element.
	
	\section{Approximation of the sine-integral}\label{appSinIntegral}
	Let $F$ be the cumulative function of a finite-mean distribution on the positive real line, 
	and let $\bar F(x) = 1/(x L(x))$ be the right-tail probability.
	Since $L(\lambda x)/L(x) \to 1$ uniformly in $\lambda$ as $x\to\infty$, the derivative $x L'(x)/L(x)$ 
	(with respect to~$\lambda$ at $\lambda=1$) tends to zero.
	Hence
	\[
	dF(x) = - d\bar F(x) = \biggl(1 + \frac{xL'(x)} {L(x)} \biggr) \frac{dx} {x^2 L(x)} \sim \frac{dx} {x^2 L(x)}.
	\]
	
	We first split the integrand into two parts, $\sin(tx) = t x + (\sin t x - t x)$,
	and split the range into disjoint intervals $(0, T)$ and $(T, \infty)$, where $T = 1/|t|$ is large.
	Then
	\begin{equation}\label{split_integral}
	\int_0^\infty\!\! \sin(tx)\, dF(x) = t \mu + \int_0^T\!\! (\sin t x - tx)\, dF(x) + \int_T^\infty\!\! (\sin(tx) - tx) \, dF(x),
	\end{equation}
	where $\mu$ is the mean.
	The first integrand is bounded by $|\sin(tx) - tx| \le |t|^3 x^3$ for $x \le T$, so the contribution from the sub-interval
    $x \le T^{1/2}$ is at most $T^{-3/2}$, while the remainder is bounded by
	\begin{eqnarray*}
		|t|^3 \int_{T^{1/2}}^T x^3\, dF(x) &=& |t|^3 \int_{T^{1/2}}^T x^3 \Big(1 + \frac{x L'(x)} {L(x)} \Bigr)\frac{dx} {x^2 L(x)} , \\
		&\sim& |t|^3 \int_{T^{1/2}}^T x^3 \frac{dx} {x^2 L(x)} , \\
		& \le & |t|^3 T \frac T {L(T)} = \frac 1 {T L(T)}.
	\end{eqnarray*}

	Since $|\sin(tx)| \le 1$, the first part of the second integral in \eqref{split_integral} has the same bound:
	\[
	\Big | \int_T^\infty \sin(tx)\, dF(x) \Big| \le \int_T^\infty dF(x) = \bar F(T) = \frac 1 {T L(T)}.
	\]
	It follows that
	\begin{eqnarray*}
		\int_0^\infty \sin(tx)\, dF(x) &=& t \mu  - t \int_T^\infty  x\, dF(x) + O\biggl( \frac t { L(T)} \biggr), \\
		&=& t\mu - t\int_T^\infty \frac{dx} {x L(x)} + O\biggl( \frac t { L(T)} \biggr) .
	\end{eqnarray*}
	
	\section{Refined asymptotic solution for Example \ref{exampleGaussCauchy2}}\label{appRefined}

The density ratio from Example \ref{exampleGaussCauchy2} is
\[
h(x) = \frac{\sqrt{2\pi} e^{x^2/2}}{\pi (1+x^2)}
\]
From the first and second derivatives of $h$, the stationary points are at $x=0$, $x=\pm 1$ and the minimum value is $h(1)=(e/2\pi)^{1/2}$. We seek an approximate positive solution $\xi(\eta)$ in $\xi$ of the equation $h(\xi)=\eta$. Equivalently, on multiplying both sides by $\sqrt{2\pi e}$ and taking logarithms, $\xi(\eta)$ solves $w=T(w)$, where $w=\xi^2+1$ and
\[
T(w) =2 \log(\eta \sqrt{2\pi e}/2) + 2\log(w).
\]
Write $w(\eta)$ for this solution in $w$, i.e.~$w(\eta)=\xi(\eta)^2+1$. For arbitrary $w,w^\prime>1$, $|T(w)-T(w')|=(2/\overline{w})|w-w'|$, where $\overline{w}\in(w,w')$. Thus $T(w)$ is a contraction mapping over $(2,\infty)$. With $w_0 = 2\log(\eta\sqrt{2\pi e}/2)$,
\begin{eqnarray*}
	w_1 &=& 2\log (\eta\sqrt{2\pi e}/2) + 2\log(w_0) \\
	&=& 2\log(\eta\sqrt{2\pi e}) + 2\log (\log(\eta\sqrt{2\pi e}/2)),
\end{eqnarray*}
is an approximation to $w(\eta)$ with error $|w(\eta)- w_1|\leq q|w_1-w_0|/(1-q)$, where $q=O(1/w(\eta))$ ($w\rightarrow \infty$). It follows that 
\[
w(\eta)= 2\log(\eta\sqrt{2\pi e }) + 2\log(\log(\eta\sqrt{2\pi e }/2)) + O\Bigl(\frac{\log\log \eta}{\log \eta}\Bigr), \quad (\eta\rightarrow \infty).
\]
Thus, $\xi(\eta)$ has asymptotic solution
\[
\vhalf \xi^2 = \log \eta + \log(\log \eta) + o(\log(\log \eta)).
\]
This gives rise to an expression for $L(\eta)=\xi^{-1}f_1(\xi)$ of the form $L(\eta)=(2 \log (\eta\log\eta))^{3/2}$, necessitating a refinement of Theorem \ref{thmSinIntegral}. Specifically, for 
$L(x)=(\beta_0 (\log(x\log x)))^{\delta+1}$ the relevant integral in the proof of Theorem \ref{thmSinIntegral} becomes
\begin{eqnarray*}
	\int_T^\infty \frac{dx}{xL(x)} &=&
	\int_T^\infty \frac {dx}{x\, (\beta_0\log (x\log x))^{\delta + 1}} = \frac{1}{\beta_0^{\delta+1}}\int_{\log T}^\infty \frac {du}{(u + \log u)^{\delta + 1}} \\
    &\sim& \frac{1}{\beta_0^{\delta+1}} \int_{\log(T\log T)}^{\infty}\Bigl(\frac{q - \log q}{q-\log q + 1} \Bigr)\frac{dq}{q^{(\delta+1)}} \sim 
	\frac{(\log (T\log T))^{2\delta}}{\delta L(T)},
\end{eqnarray*}
where in the second transformation we have used Lagrange inversion \citep[][p.~25]{deBruijn1958} for $v:=e^q=ue^u$ in the form $u\sim \log v - \log \log v = q-\log q$. The appropriate modifications to Corollaries \ref{corollAn} and \ref{corollXbar} then give 
\[
\PP_0(\hat \theta_n >0) \sim (2(\log(n\log n)))^{-1}
\]
which closely approximates the empirical probabilities (see Fig.~\ref{FigSim}a).

	\section{Proof of Proposition \ref{prop:tail-inheritance}}\label{app:tail-prop}

    \begin{proof}
The convolution density at $y > 0$ is
\[
f_1(y) = \int_\Real  \phi(y - x) \, G(dx) = \int_{|x|\le y^{1/2}} \phi(y - x) \, G(dx) 
	+ \int_{|x| > y^{1/2}}\phi(y - x) \, G(dx).
\]
For large positive $y$, the first integral is bounded by $\phi(y - y^{1/2})$, and thus negligible.
In the second integral, the contribution from $x < -y^{1/2}$ is negligible compared with
the contribution from the positive interval $x > y^{1/2}$.
Using the regular-variation approximation for large~$y$, the log integrand 
$h(x) = -\vhalf (y - x)^2 - x^\gamma L(x)$ has a unique maximum at
\[
\hat x = y - y^{\gamma-1} \bigl(\gamma L(y) - y L'(y) \bigr) \sim y.
\]
Since the negative second derivative of the integrand is asymptotic to one, 
the dominant contribution for large~$y$ is the Laplace approximation
\[
-\log f_1(y) \sim-\log \int _{x > y^{1/2}} \phi(x - y)\, g(x) \, dx \sim -\log g(\hat x) \sim -\log g(y),
\]
where $g$ is the density of $G$. The Laplace argument can be modified for $\gamma = 2$, but the conclusion is different.
\end{proof}

	\section{Derivation of equation \eqref{eqProfileLikDeriv}}\label{appProfileLik}
	
	For a log-likelihood function of the form 
	\[
	\ell(\theta, \nu) = \sum_{i=1}^n \log\bigl((1-\theta)f_0(X_i) + \theta f_1(X_i; \nu)\bigr),
	\]
	the profile log-likelihood derivative is 
	\begin{equation}\label{eqDerivPLL}
	\frac{d}{d\theta}\ell(\theta; \hat\nu(\theta)) = \sum_{i=1}^n \frac{(h(X_i;\hat\nu(\theta))-1) + \theta t(X_i;\hat\nu(\theta))}{1+\theta(h(X_i;\hat\nu(\theta))-1)},
	\end{equation}
	where $h(x;\nu)=f_1(x/\nu)/f_0(x)$ and 
	\begin{equation}\label{eqt}
	t(x;\hat\nu(\theta))=\frac{1}{f_0(x)}\frac{\partial f_1(x;\hat\nu(\theta))}{\partial \hat\nu} \frac{d\hat\nu(\theta)}{d\theta}.
	\end{equation}
	and for any given $\theta$, $\hat\nu(\theta)$ solves 
	\[
	F(\theta,\hat\nu(\theta)):= \frac{\partial \ell(\theta,\nu)}{\partial \nu}\biggr|_{\nu=\hat\nu(\theta)} =0.
	\]
	Total differentiation of this equation $F(\theta,\hat\nu(\theta))=0$ gives
	\begin{equation}\label{dtheta}
	\frac{d\hat\nu(\theta)}{d\theta} = -\frac{j_{\nu\theta}(\theta,\hat\nu(\theta))}{j_{\nu\nu}(\theta,\hat\nu(\theta))},
	\end{equation}
	where $j_{\nu\theta}(\theta,\hat\nu(\theta))$ and $j_{\nu\nu}(\theta,\hat\nu(\theta))$ are the cross and double derivatives of the joint log-likelihood function evaluated at $(\theta,\hat\nu(\theta))$, given in the present context of $f_1(x;\nu)=\psi_\nu(x)\sim K_\nu/|x|^{\nu+1}$ by
	\[
	j_{\nu\theta}(\theta,\hat\nu(\theta)) = \sum_{i=1}^n \frac{d \psi_\nu (X_i)}{d\nu} \biggl|_{\nu=\hat\nu(\theta)}, \quad j_{\nu\nu}(\theta,\hat\nu(\theta)) = \theta \sum_{i=1}^n \frac{d^2 \psi_\nu (X_i)}{d\nu^2} \biggl|_{\nu=\hat\nu(\theta)},
	\]
	where
	\begin{eqnarray*}
		\psi'_\nu(x) :=	\frac{d \psi_\nu(x)}{d\nu}  &\sim & \frac{K_\nu' - K_\nu \log |x|}{|x|^{\nu+1}}\\
		\psi''_\nu(x) :=	\frac{d^2\psi_\nu(x)}{d\nu^2} &\sim & \frac{K_\nu'' - 2K_\nu'\log |x| + K_\nu(\log |x|)^2}{|x|^{\nu+1}}.
	\end{eqnarray*}
	On substituting \eqref{dtheta} in \eqref{eqDerivPLL} via \eqref{eqt}, and on making the replacement $f_0(x)=\phi(x)$, the profile log-likelihood derivative is
	\begin{equation}\label{eqProfileLLDeriv}
	\frac{d}{d\theta}\ell(\theta; \hat\nu(\theta)) 
	=\sum_{i=1}^{n}\frac{(\zeta_{\hat\nu(\theta)}(X_i) - r_{\hat{\nu}(\theta)}(X_i)-1)}{{1+\theta(\zeta_{\hat\nu(\theta)}(X_i)-1)}},
	\end{equation}
	where
	\begin{eqnarray*}
		r_{\hat{\nu}(\theta)}(X_i)&=&\frac{\psi'_{\hat\nu(\theta)}(X_i)}{\phi(X_i)}\frac{\sum_{j=1}^n \psi'_{\hat\nu(\theta)}(X_j)}{\sum_{j=1}^n \psi''_{\hat\nu(\theta)}(X_j)} \\
		&\sim& \frac{K_{\hat{\nu}(\theta)} \log |X_i|}{|X_i|^{\hat{\nu}(\theta)+1} \phi(X_i)}\biggl(\frac{\sum_{j=1}^n K_{\hat{\nu}(\theta)} \log |X_j|/|X_j|^{\hat{\nu}(\theta)+1}}{\sum_{j=1}^n K_{\hat{\nu}(\theta)}(\log |X_j|)^2/|X_j|^{\hat{\nu}(\theta)+1}}\biggr)\rightarrow_{\Prob_0} 0, \quad (n\rightarrow \infty).
	\end{eqnarray*}
	Taking the limit of \eqref{eqProfileLLDeriv} as $\theta\rightarrow 0$ gives \eqref{eqProfileLikDeriv}. \qed
	
\end{appendix}

\bigskip
\bigskip

\bigskip
\bigskip

\end{document}